\renewcommand{\phi}{\varphi}
\def\ch{\mathrm{ch}}
\def\cl{\mathrm{cl}}
\def\dd{\partial}
\def\Ga{\Gamma}
\def\la{\lambda}
\newcommand{\mc}[1]{\mathcal{#1}}
\newcommand{\mb}[1]{\mathbb{#1}}
\DeclareMathOperator{\Hom}{Hom}
\DeclareMathOperator{\End}{End}
\DeclareMathOperator{\ad}{ad}
\DeclareMathOperator{\Ker}{Ker}
\DeclareMathOperator{\Res}{Res}
\DeclareMathOperator{\fil}{F}
\DeclareMathOperator{\gr}{gr}
\def\smallunderbrace#1{\mathop{\vtop{\m@th\ialign{##\crcr
   $\hfil\displaystyle{#1}\hfil$\crcr
   \noalign{\kern3\p@\nointerlineskip}%
   \tiny\upbracefill\crcr\noalign{\kern3\p@}}}}\limits}
\theoremstyle{plain}
\newtheorem{theorem}{Theorem}[section]
\newtheorem*{theorem*}{Theorem}
\newtheorem{lemma}[theorem]{Lemma}
\newtheorem{proposition}[theorem]{Proposition}
\theoremstyle{definition}
\newtheorem{definition}[theorem]{Definition}
\newtheorem{example}[theorem]{Example}
\theoremstyle{remark}
\newtheorem{remark}[theorem]{Remark}
\numberwithin{equation}{section}
\definecolor{light}{gray}{.9}
\begin{document}

\title{Chiral vs classical operad}

\author{Bojko Bakalov}
\address{Department of Mathematics, North Carolina State University,
Raleigh, NC 27695, USA}
\email{bojko\_bakalov@ncsu.edu}
%
\author{Alberto De Sole}
\address{Dipartimento di Matematica, Sapienza Universit\`a di Roma,
P.le Aldo Moro 2, 00185 Rome, Italy}
\email{desole@mat.uniroma1.it}
\urladdr{www1.mat.uniroma1.it/$\sim$desole}
\author{Reimundo Heluani}
\address{IMPA, Rio de Janeiro, Brasil}
\email{heluani@impa.br}
\author{Victor G. Kac}
\address{Department of Mathematics, MIT,
77 Massachusetts Ave., Cambridge, MA 02139, USA}
\email{kac@math.mit.edu}

\subjclass{
Primary 18D50; Secondary 17B63, 17B69, 05C25
}


\begin{abstract}
We establish an explicit isomorphism between the associated graded of the filtered
chiral operad and the classical operad,
which is important for computing the cohomology of vertex algebras.
\end{abstract}
\keywords{
Chiral and classical operads,
$\Gamma$-residue and $\Gamma$-Fourier transform.
}

\maketitle


\pagestyle{plain}

\section{Introduction}\label{sec:1}

This is the second in a series of papers aimed at computing the cohomology of vertex algebras.
In our first paper \cite{BDSHK}, for a vector superspace $V$ with an even endomorphism $\partial$, we introduced the chiral operad $P^{\ch}(V)$. 
This is an explicit description in local coordinates of the chiral operad of Beilinson and Drinfeld \cite{BD04},
associated to a $\mc D$-module on a smooth algebraic curve $X$,
where the geometric language of $\mc D$-modules
is replaced by the linear algebraic language of (integrals of) lambda-brackets.
We are taking $X=\mb A^1$ and the $\mc D$-module translation equivariant.

The operad $P^{\ch}(V)$ ``governs'' vertex algebra cohomology in the following sense. 
To each vector superspace $V$ 
over a field $\mb F$ of characteristic zero,
with an even endomorphism $\partial$,
it canonically associates a $\mb Z$-graded Lie superalgebra
\begin{equation}\label{intro:1.1}
W^{\ch}(V)
=
\bigoplus_{k=-1}^\infty W_k^{\ch}(V)
\,, \qquad\text{ where}\quad
W^{\ch}_k(V)=P^{\ch}(V)(k+1)^{S_{k+1}} \,,
\end{equation}
such that
\begin{equation}\label{intro:1.2}
W^{\ch}_{-1}(V)=V/\partial V
\,,\qquad
W^{\ch}_0(V)=\End_{\mb F[\partial]} V
\,.
\end{equation}
This Lie superalgebra is an explicit description in local coordinates of the Lie superalgebra constructed by Tamarkin in \cite{Tam02} in his study of deformations of chiral algebras, in the particular case of translation equivariant chiral algebras on $\mathbb{A}^1$. 

The space $W_k^{\ch}(V)$ consists of all elements from $P^{\ch}(V)(k+1)$ that are invariant under the action of the symmetric group $S_{k+1}$, and
the Lie bracket on $W^{\ch}(V)$ is defined via the $\circ_i$-products of the operad $P^{\ch}(V)$.
For the construction of the $\mb Z$-graded Lie superalgebra associated
to an arbitrary linear operad, see \cite{Tam02,LV12,BDSHK}.

An odd element $X\in W_1^{\ch}(\Pi V)$ satisfying $[X,X]=0$,
where $\Pi V$ stands for $V$ with the reversed parity,
defines on $V$ the structure a non-unital vertex algebra.
Consequently, $(W^{\ch}(\Pi V),\ad X)$ is a differential graded Lie superalgebra
whose cohomology is the cohomology of the vertex algebra $V$ defined by $X$,
with coefficients in the adjoint module.
Alternatively, a non-unital vertex algebra structure on $V$ is equivalent to a morphism of operads $\mathcal{L}ie \rightarrow P^{\ch}(V)$ \cite[Sec.\ 3.3.3]{BD04}. 
The image of the binary operation $[\,,] \in \mathcal{L}ie(2)$ is given by the (parity shifted) operation $X$. 

Let us recall the definition of the operad $P^{\ch}(V)$ associated to a vector superspace $V$
with an even endomorphism $\partial$.
For a non-negative integer $n$, let 
$$
\mc O^{\star T}_n=\mb F[(z_i-z_j)^{\pm1}\,|\,1\leq i<j\leq n]
\,,
$$
the algebra of Laurent polynomials in $z_i-z_j$.
Denote by $\partial_i$ the endomorphism of $V^{\otimes n}$
acting as $\partial$ on the $i$-th factor.
Introduce the superspace
$$
V_n=V[\lambda_1,\dots,\lambda_n]/\langle\partial+\lambda_1+\dots+\lambda_n\rangle
\,,
$$
where all variables $\lambda_i$ have even parity
and $\langle\Phi\rangle$ stands for the image of the endomorphism $\Phi$.
The superspace of $n$-ary chiral operations $P^{\ch}(n):=P^{\ch}(V)(n)$
is defined as the set of all linear maps \cite[Eq.\ (6.11)]{BDSHK}
\begin{equation} \label{eq:chiral-1}
X\colon V^{\otimes n}\otimes\mc O^{\star T}_n \to V_n
\,,\qquad
v\otimes f \mapsto X_{\lambda_1,\dots,\lambda_n}(v\otimes f)
\,,
\end{equation} 
satisfying the following two sesquilinearity axioms ($i,j=1,\dots,n$):
\begin{equation} \label{eq:chiral-2}
\begin{aligned}
X_{\lambda_1,\dots,\lambda_n} ( v \otimes \partial_{z_i} f )
&= X_{\lambda_1,\dots,\lambda_n} ( (\partial_i+\la_i) v \otimes f ) \,, \\
X_{\lambda_1,\dots,\lambda_n} (v \otimes (z_i-z_j)f)
&= (\partial_{\lambda_j}-\partial_{\lambda_i}) X_{\lambda_1,\dots,\lambda_n} (v \otimes f) \,.
\end{aligned}
\end{equation} 
Note that $P^{\ch}(n)=W^{\ch}_{n-1}(V)$ is given by \eqref{intro:1.2} for $n=0,1$.
In \cite{BDSHK}, we also introduced the action of $S_n$ on $P^{\ch}(n)$
and the $\circ_i$-products to make $P^{\ch}(V)$ into an operad.

Now suppose that $V$ is equipped with an increasing filtration of $\mb F[\partial]$-submodules
\begin{equation}\label{intro:1.4}
\fil^{-1}V=\{0\}
\,\subset\,
\fil^0V
\,\subset\,
\fil^1V
\,\subset\,
\fil^2V
\,\subset\,
\cdots
\,\subset\,
V
\,.
\end{equation}
Taking the increasing filtration of $\mc O^{\star T}_n$ by the number of divisors,
the filtration \eqref{intro:1.4}
induces an increasing filtration on $V^{\otimes n}\otimes\mc O^{\star T}_n$.
The latter induces a decreasing filtration on $P^{\ch}(n)$.
The associated graded pieces $\gr^r P^{\ch}(n)$, $r\geq0$,
form a graded operad denoted by $\gr P^{\ch}$.

On the other hand, in \cite{BDSHK} we introduced the operad $P^\cl(V)$, which ``governs'' 
the Poisson vertex algebra cohomology in a similar way. 
Moreover, assuming that $V$ is $\mb Z$-graded by $\mb F[\partial]$-submodules, we have 
the associated $\mb Z$-grading on the space of $n$-ary operations $P^{\cl}(n) := P^{\cl}(V)(n)$ 
$$
P^\cl(n) = \bigoplus_{r\in\mb Z} \gr^r P^\cl(n) \,.
$$
Next, assuming that $V$ is endowed with the filtration \eqref{intro:1.4}, we have the linear map
\begin{equation}\label{intro:1.5}
\gr^r P^\ch(V)(n) \to \gr^r P^\cl(\gr V)(n) \,, \qquad r\ge0\,.
\end{equation}
These constructions are recalled in Section \ref{sec:3}.
We proved in \cite{BDSHK} that the map \eqref{intro:1.5} is an injective morphism of operads.
The surjectivity of this map was proposed as a conjecture.

The main result of the present paper is that the map \eqref{intro:1.5} is an isomorphism, provided that 
the filtration \eqref{intro:1.4} is induced by a grading by $\mb F[\partial]$-modules (Theorem \ref{thm:main}). 
In fact, we construct explicitly a map, inverse to \eqref{intro:1.5}, using the notions of $\Gamma$-residue 
and $\Gamma$-Fourier transform introduced in Section \ref{sec:4}. 

Theorem \ref{thm:main} is important since it allows to compare the vertex algebra and Poisson 
vertex algebra cohomology. For example, using the obvious fact that this theorem holds 
(without any assumptions) for $n=0,1$ and the results of \cite{DSK12,DSK13} 
on variational Poisson cohomology, we calculated in \cite{BDSHK} the $0$-th and $1$-st cohomology 
of the vertex algebra of free bosons, computing thereby its Casimirs and derivations.
The connection between the classical and variational Poisson cohomology is discussed 
in the forthcoming paper \cite{BDSHKV19}.

Our operad $P^\cl(V)$ was shown to be related to Beilinson and Drinfeld's operad 
of classical operations in \cite[Appendix]{BDSHK}. The isomorphism of Theorem \ref{thm:main} 
is stated in \cite[Sec.\ 3.2.5]{BD04} for the trivial filtration of $V$ in the geometric context under the assumption 
that the corresponding $\mathcal{D}$-module is projective. 

As pointed out by a referee, a more appropriate terminology would have been
the operads of chiral and of classical operations for $P^{\ch}(V)$ and $P^{\cl}(V)$, respectively.
We opted for the shorter names chiral and classical operads.

Throughout the paper the base field $\mb F$ has characteristic $0$.

\subsubsection*{Acknowledgments} 

This research was partially conducted during the authors' visits 
to RIMS in Kyoto and to the University of Rome La Sapienza.
We are grateful to these institutions for their kind hospitality.
We thank the referees for their valuable comments.
The first author is supported in part by a Simons Foundation grant 584741.
The second author was partially supported by the national PRIN fund n.\ 2015ZWST2C$\_$001
and the University funds n. RM116154CB35DFD3 and RM11715C7FB74D63.
The third author is partially supported by the Bert and Ann Kostant fund.

\section{The chiral operad}\label{sec:2}

In this section, we recall the definition of the chiral operad $P^\ch(V)$ from \cite[Sec.\ 6]{BDSHK}.

\subsection{The spaces $\mc O_{n}^{\star T}$}\label{sec:ostart}
Here and further, we will consider rational functions in the variables $z_1,z_2,\dots$ and use the shorthand notation $z_{ij}=z_i-z_j$.
For a fixed positive integer $n$, we denote by $\mc O_{n} = \mb F[z_1,\dots,z_n]$ the algebra of polynomials, and by
$$
\mc O_{n}^T
=
\mb F[z_{ij}]_{1\leq i<j\leq n}
= \Ker \sum_{i=1}^n \partial_{z_i}
$$
the subalgebra of translation invariant polynomials.
Let $\mc O_{n}^{\star}$ be the localization of $\mc O_{n}$ with respect to the diagonals $z_i=z_j$ for $i\neq j$, i.e.,
$$
\mc O_{n}^{\star}
=
\mb F[z_1,\dots,z_n][z_{ij}^{-1}]_{1\leq i<j\leq n},
$$
and let 
$$
\mc O_{n}^{\star T}
=\mb F[z_{ij}^{\pm 1}]_{1\leq i<j\leq n}.
$$
We also set $\mc O_{0} = \mc O_{0}^T = \mc O_{0}^{\star}=\mc O_{0}^{\star T}=\mb F$.
Note that $\mc O_{1} = \mc O_{1}^{\star} = \mb F[z_1]$ 
and $\mc O_{1}^T = \mc O_{1}^{\star T}=\mb F$.
At times we will denote $\mc O^{\star}_n=\mc O^{\star}_n(z_1,\dots,z_n)$,
if we want to specify the variables $z_1,\dots,z_n$.

We introduce an increasing filtration of $\mc O_{n}^{\star}$ given by the number of divisors:
\begin{equation}\label{fil1}
\begin{split}
\fil^{-1} \mc O_{n}^{\star} &= \{0\} \subset \fil^0 \mc O_{n}^{\star} = \mc O_{n} \subset
\fil^1 \mc O_{n}^{\star} = \sum_{i<j} \mc O_{n} [z_{ij}^{-1}] \subset \\
\cdots&\subset \fil^r \mc O_{n}^{\star} = \sum \mc O_{n} [z_{i_1,j_1}^{-1},\dots, z_{i_r,j_r}^{-1}] \subset\cdots
\subset  \fil^{n-1} \mc O_{n}^{\star} = \mc O_{n}^{\star}.
\end{split}
\end{equation}
In other words, the elements of $\fil^r \mc O_{n}^{\star}$ are sums of rational functions with
at most $r$ poles each, not counting multiplicities.
The fact that $\fil^{n-1} \mc O_{n}^{\star} = \mc O_{n}^{\star}$ was proved in \cite{BDSHK}
(it is a consequence of the proof of Lemma 8.4 there). 
By restriction, we have the induced increasing filtration 
$$
\fil^r\mc O^{\star T}_n=\fil^r\mc O^{\star}_n\cap\mc O^{\star T}_n
\,.
$$

\subsection{The operad $P^\ch (V)$}\label{sec:wch}

Let $V=V_{\bar 0}\oplus V_{\bar 1}$ be a vector superspace endowed
with an even endomorphism $\partial$. For every $i=1,\dots,n$, we will denote by $\partial_i$ the action of $\partial$ on the $i$-th factor of the tensor power $V^{\otimes n}$:
\begin{equation}\label{ddi}
\partial_i v = v_1 \otimes\cdots\otimes \partial v_i  \otimes\cdots\otimes v_n \quad\text{for}\quad
v = v_1 \otimes\cdots\otimes v_n \in V^{\otimes n}.
\end{equation}
Consider the space 
\begin{equation}\label{20160722:eq1}
V[\lambda_1,\dots,\lambda_n]\big/\big\langle\partial+\lambda_1+\dots+\lambda_n\big\rangle
\,,
\end{equation}
where here and further, $\langle\Phi\rangle$ denotes the image of an endomorphim $\Phi$.
 
The space of $n$-ary \emph{chiral operations} $P^\ch(n):=P^{\ch}(V)(n)$ 
is defined as the set of all linear maps \cite[Eq.\ (6.11)]{BDSHK}
\begin{equation}\label{20160629:eq2-c}
\begin{split}
X\colon
V^{\otimes n}\otimes\mc O_{n}^{\star T}
&\to
V[\lambda_1,\dots,\lambda_n]\big/\big\langle\partial+\lambda_1+\dots+\lambda_n\big\rangle
\,,\\
\vphantom{\Big(}
v_1 \otimes\dots\otimes v_n\otimes &f(z_1,\dots,z_n)
\mapsto
X_{\lambda_1,\dots,\lambda_n} (v_1\otimes\dots\otimes v_n \otimes f)
\\
&=
X_{\lambda_1,\dots,\lambda_n}^{z_1,\dots,z_n} (v_1\otimes\dots\otimes v_n \otimes f(z_1,\dots,z_n))
\,,
\end{split}
\end{equation}
satisfying the following two \emph{sesquilinearity} conditions:
\begin{align}\label{20160629:eq4a}
X_{\lambda_1,\dots,\lambda_n} ( v \otimes \partial_{z_i} f )
&= X_{\lambda_1,\dots,\lambda_n} ( (\partial_i+\la_i) v \otimes f ) \,, \\
\label{20160629:eq4b}
X_{\lambda_1,\dots,\lambda_n} (v \otimes z_{ij}f)
&= (\partial_{\lambda_j}-\partial_{\lambda_i}) X_{\lambda_1,\dots,\lambda_n} (v \otimes f) \,.
\end{align}
For example, we have:
\begin{align}\label{pch0}
P^\ch(0) &= \Hom_{\mb F} (\mb F, V/\langle\dd\rangle) \cong V/\dd V, \\
\label{pch1}
P^\ch(1) &= \Hom_{\mb F[\dd]} (V, V[\la_0]/\langle\dd+\la_0\rangle) \cong \End_{\mb F[\dd]} (V).
\end{align}
The $\mb Z/2\mb Z$-grading of the superspace $P^\ch(n)$ is induced 
by that of the vector superspace $V$, where $\mc O_{n}^{\star T}$ and all
variables $\lambda_i$ are considered even.
%

%
\begin{remark} The spaces $V^{\otimes n} \otimes \mc O_n^{\star T}$ and \eqref{20160722:eq1} 
are canonically modules over the algebra $\mathcal{D}$ of translation invariant differential operators 
in $n$-variables. 
Taking the quotient in \eqref{20160722:eq1} means that the sum of coordinate vector fields
is the diagonal vector field.
Equations \eqref{20160629:eq4a}--\eqref{20160629:eq4b} are equivalent to stating 
that $X$ is a morphism of $\mathcal{D}$-modules. 
\end{remark}

One can also define an action of the symmetric group and compositions of chiral operations, 
turning $P^\ch(V)$ into an operad (see \cite[Eq.\ (6.25)]{BDSHK}). 
However, these structures will not be needed in the present paper,
hence we do not recall their definition.

\subsection{Filtration of $P^\ch(V)$}\label{sec:pchfil}
Now suppose that $V$ is equipped with an increasing filtration
of $\mb F[\partial]$-submodules
\begin{equation}\label{eq:last3}
\fil^{-1}V=\{0\}
\,\subset\,
\fil^0V
\,\subset\,
\fil^1V
\,\subset\,
\fil^2V
\,\subset\,
\cdots
\,\subset\,
V
\,.
\end{equation}
Since $\mc O_{n}^{\star T}$ is also filtered by \eqref{fil1}, we obtain
an increasing filtration on the tensor products
$$
\fil^s\big(V^{\otimes n}\otimes \mc O_{n}^{\star T}\big)
=
\sum_{s_1+\dots+s_{n}+p\leq s}
\fil^{s_1}V
\otimes
\cdots
\otimes
\fil^{s_n}V
\otimes
\fil^{p}\mc O_{n}^{\star T}
\,,
$$
if $s\geq0$, and $\fil^s(V^{\otimes n}\otimes \mc O_{n}^{\star T})=\{0\}$ if $s<0$.
%
%
%
This induces a decreasing filtration of $P^\ch(n)$, where $\fil^r P^\ch(n)$ for $r\in\mb Z$ is defined 
as the set of all elements $X$ such that
\begin{equation}\label{fil4-ref}
X\big( 
\fil^s(V^{\otimes n} \otimes \mc O_{n}^{\star T})
\big)
\subset
(\fil^{s-r}V)[\lambda_1,\dots,\lambda_n]/\langle\partial+\lambda_1+\dots+\lambda_n\rangle
\,,
\end{equation}
for every $s$.
Then, as usual, the associated graded spaces are defined by
\begin{equation}\label{grr}
\gr^r P^\ch(n)
=
\fil^r P^\ch(n) / \fil^{r+1} P^\ch(n).
\end{equation}
In fact, the composition maps are compatible with the filtration \eqref{fil4-ref}
and, therefore, the associated graded \eqref{grr} is a graded operad (see \cite[Prop.\ 8.1]{BDSHK}).

\section{The classical operad}\label{sec:3}

Here we recall the definition of the classical operad $P^\cl(V)$ from \cite[Sec.\ 10]{BDSHK}.

\subsection{$n$-graphs}\label{sec:6a.1}

For a positive integer $n$, we define an $n$-\emph{graph}
as a graph $\Gamma$ with $n$ vertices labeled by $1,\dots,n$
and an arbitrary collection $E(\Gamma)$ of oriented edges.
We denote 
by $\mc G(n)$ the collection of all $n$-graphs
without tadpoles,
and by $\mc G_0(n)$ the collection of all \emph{acyclic} $n$-graphs,
i.e., $n$-graphs that have 
no cycles (including tadpoles and multiple edges).
For example, $\mc G_0(1)$ consists of the graph with a single vertex labeled $1$ and no edges,
and $\mc G_0(2)$ consists of three graphs:
\begin{equation}\label{eq:2-graphs}
\begin{array}{l}
\begin{tikzpicture}
\draw (0.5,1) circle [radius=0.07];
\node at (0.5,0.7) {1};
\draw (1.5,1) circle [radius=0.07];
\node at (1.5,0.7) {2};
\node at (2,0.85) {,};
\draw (3.7,1) circle [radius=0.07];
\node at (3.7,0.7) {1};
\draw (4.7,1) circle [radius=0.07];
\node at (4.7,0.7) {2};
\draw[->] (3.8,1) -- (4.6,1);
\node at (5,0.85) {,};
\draw (7,1) circle [radius=0.07];
\node at (7,0.7) {1};
\draw (8,1) circle [radius=0.07];
\node at (8,0.7) {2};
\draw[<-] (7.1,1) --(7.9,1);
\node at (8.5,0.85) {.};
\end{tikzpicture}
\\
E(\Gamma)=\emptyset
\,\,\,\,\,,\,\,\,\,\qquad
E(\Gamma)\!=\!\{1\!\to\!2\}
\,\,,\,\,\qquad
E(\Gamma)\!=\!\{2\!\to\!1\}
\end{array}
\end{equation}
By convention, we also let $\mc G_0(0)=\mc G(0)=\{\emptyset\}$ be the set consisting of a single element (the empty graph with $0$ vertices).

A graph $L$ will be called a \emph{line} if its set of edges is of the form $\{i_1\to i_2,\,i_2\to i_3,\dots,\,i_{n-1}\to i_n\}$ where $\{i_1,\dots,i_n\}$ is a permutation of $\{1,\dots,n\}$:
\begin{equation}\label{eq:line}
\begin{tikzpicture}
\node at (-0.2,1) {$L=$};
\draw (0.5,1) circle [radius=0.07];
\node at (0.5,0.6) {$i_1$};
\draw[->] (0.6,1) -- (0.9,1);
\draw (1,1) circle [radius=0.07];
\node at (1,0.6) {$i_2$};
\draw[->] (1.1,1) -- (1.4,1);
\node at (1.7,1) {$\cdots$};
\draw[->] (1.9,1) -- (2.2,1);
\draw (2.3,1) circle [radius=0.07];
\node at (2.3,0.6) {$i_n$};
\node at  (2.9,0.8) {.};
\end{tikzpicture}
\end{equation}
An \emph{oriented cycle} $C$ in a graph $\Gamma$ is, by definition, 
a collection of edges of $\Gamma$ forming a closed sequence (possibly with self intersections):
\begin{equation}\label{20170823:eq4a}
C=\{i_1\to i_2,\,i_2\to i_3,\dots,\,i_{s-1}\to i_s,\,i_s\to i_1\}\subset E(\Gamma)
\,.
\end{equation}

There is a natural (left) action of the symmetric group $S_n$
on the set $\mc G(n)$ of $n$-graphs, which preserves the subset $\mc G_0(n)$ of acyclic graphs.
Given $\Gamma\in\mc G(n)$ and $\sigma\in S_n$,
we define $\sigma(\Gamma)$ to be the same graph as $\Gamma$,
but with the vertex that was labeled $1$ relabeled as $\sigma(1)$,
the vertex $2$ relabeled as $\sigma(2)$,
and so on up to the vertex $n$ now relabeled as $\sigma(n)$.
For example, if $L_0$ is the line with edges $\{1\to2,2\to3,\dots,n-1\to n\}$ and $\sigma\in S_n$,
then $\sigma(L_0)=L$ is the line \eqref{eq:line} where $i_k=\sigma(k)$.

\subsection{The operad $P^\cl(V)$}\label{sec:6.2}

As before, let $V=V_{\bar 0}\oplus V_{\bar 1}$ be a vector superspace endowed
with an even endomorphism $\partial$.
As a vector superspace, $P^\cl(n):=P^\cl(V)(n)$ 
is defined as the vector superspace (with the pointwise addition and scalar multiplication)
of all maps 
\begin{align}\label{20170614:eq4}
Y\colon
\mc G(n)\times V^{\otimes n}
&\to
V[\lambda_1,\dots,\lambda_n] / \langle \dd+\lambda_1+\dots+\lambda_n \rangle
\,, \\ \label{20170614:eq5}
\Gamma \times v &\mapsto
Y^{\Gamma}_{\lambda_1,\dots,\lambda_n}(v) \,,
\end{align}
which depend linearly on 
$v 
\in V^{\otimes n}$,
and satisfy
the {cycle relations}
and {sesquilinearity conditions} described below.
The $\mb Z/2\mb Z$-grading of the superspace $P^\cl(n)$ is induced 
by that of the vector superspace $V$,
by letting $\Gamma$ and the variables $\lambda_i$ be even.

The \emph{cycle relations} state that if an $n$-graph $\Gamma\in\mc G(n)$ contains an oriented cycle
$C\subset E(\Gamma)$, then:
\begin{equation}\label{eq:cycle}
Y^{\Gamma} =0 
\,,\qquad
\sum_{e\in C} Y^{\Gamma\backslash e} =0 
\,,
\end{equation}
where $\Gamma\backslash e\in\mc G(n)$ is the graph obtained from $\Gamma$ 
by removing the edge $e$ and keeping the same set of vertices.
In particular, applying the second cycle relation \eqref{eq:cycle} for an oriented cycle of length $2$,
we see that changing the orientation of a single edge of $\Gamma\in\mc G(n)$
amounts to a change of sign of $Y^{\Gamma}$.

To write the {sesquilinearity conditions}, let us first introduce some notation.
For a graph $G$ with a set of vertices labeled by a subset $I\subset\{1,\dots,n\}$,
we let
\begin{equation}\label{com3}
\la_G = \sum_{i\in I} \la_i \,, 
\qquad \dd_G = \sum_{i\in I} \dd_i \,, 
\end{equation}
where as before $\dd_i$ denotes the action of $\dd$ on the $i$-th factor in $V^{\otimes n}$
(see \eqref{ddi}).
Then for every connected component $G$ of $\Gamma\in\mc G(n)$ with a set of vertices $I$,
we have two \emph{sesquilinearity conditions}:
\begin{align}\label{eq:sesq1}
(\partial_{\lambda_j} - \partial_{\lambda_i})
Y^{\Gamma}_{\lambda_1,\dots,\lambda_n}(v) &= 0
\quad\text{for all}\quad
i,j\in I \,,
\\ \label{eq:sesq2}
Y^{\Gamma}_{\lambda_1,\dots,\lambda_n}
\bigl( (\partial_G+\la_G)v \bigr)
&=0 \,, \qquad v\in V^{\otimes n} \,.
\end{align}
The first condition \eqref{eq:sesq1} means that the polynomial 
$Y^{\Gamma}_{\lambda_1,\dots,\lambda_n}(v)$
is a function of the variables $\lambda_{\Gamma_\alpha}$, where 
the $\Gamma_\alpha$'s are the connected components of $\Gamma$, 
and not of the variables $\lambda_1,\dots,\lambda_n$ separately.

%
%
In \cite[Eq.\ (10.11)]{BDSHK}, we also defined the action of the symmetric group
and compositions of maps in $P^\cl(V)$, turning it into an operad. 
However, these structures will not be needed in the present paper.

\subsection{Grading of $P^\cl(V)$}\label{sec:grpcl}

Suppose now that $V=\bigoplus_{t\in\mb Z} \gr^t V$ is graded by $\mb F[\partial]$-sub\-modules,
and consider the induced grading of the tensor powers $V^{\otimes n}$:
$$
\gr^t V^{\otimes n}
=
\sum_{t_1+\dots+t_{n}=t}
\gr^{t_1}V
\otimes
\cdots
\otimes
\gr^{t_n}V
\,.
$$
Then $P^\cl(V)$
has a grading defined as follows:
$Y\in\gr^r P^\cl(n)$ if
\begin{equation}\label{pclgrading}
Y^\Gamma_{\lambda_1,\dots,\lambda_n}(\gr^t V^{\otimes n})
\,\subset\,
(\gr^{s+t-r}V)[\lambda_1,\dots,\lambda_n]/
\langle\partial+\lambda_1+\dots+\lambda_n\rangle
\end{equation}
for every graph $\Gamma\in\mc G(n)$ with $s$ edges
(see \cite[Rem.\ 10.2]{BDSHK}).
%

\subsection{The map from $\gr P^\ch(V)$ to $P^\cl(V)$}\label{sec:grpchpcl}

For a graph $\Gamma\in\mc G(n)$ with a set of edges $E(\Ga)$, we introduce the function
\begin{equation}\label{gr1}
p_\Ga = p_\Ga(z_1,\dots,z_n) = \prod_{(i\to j) \in E(\Ga)} z_{ij}^{-1} \,, 
\qquad z_{ij}=z_i-z_j \,.
\end{equation}
Note that $p_\Ga \in \fil^s \mc O_{n}^{\star T}$ if $\Ga$ has $s$ edges. 
\begin{lemma}\label{lem:pgamma}
Let\/ $\Ga\in\mc G(n)$ be a graph with $s$ edges, containing a cycle $C\subset E(\Gamma)$.
Then:
\begin{enumerate}[(a)]
\item
$p_\Ga \in \fil^{s-1} \mc O_{n}^{\star T}$;
\item
$\sum_{e\in C}p_{\Gamma\backslash e}=0$.
\end{enumerate}
\end{lemma}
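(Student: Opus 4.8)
The plan is to dispatch (b) by a one-line computation and then prove (a) by induction on the length of the cycle, the inductive step being an Arnold-type partial-fraction identity. For (b), note that removing an edge $e=(i\to j)$ from $\Ga$ simply deletes the factor $z_{ij}^{-1}$, so $p_{\Ga\backslash e}=z_{ij}\,p_\Ga$. Hence $\sum_{e\in C}p_{\Ga\backslash e}=p_\Ga\sum_{e\in C}z_e$, where $z_e=z_{ij}$ for $e=(i\to j)$; writing the cycle as $C=\{i_1\to i_2,\dots,i_m\to i_1\}$, the oriented edge-forms telescope, $\sum_{e\in C}z_e=\sum_k(z_{i_k}-z_{i_{k+1}})=0$ (indices cyclic), which gives $\sum_{e\in C}p_{\Ga\backslash e}=0$.

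For (a), I would first factor $p_\Ga=q\,p_C$, where $p_C=\prod_{e\in C}z_e^{-1}$ collects the $m$ cycle edges and $q=\prod_{e\in E(\Ga)\setminus C}z_e^{-1}$ collects the remaining $s-m$ edges. Since $q\in\fil^{s-m}\mc O_n^{\star}$ and the filtration is multiplicative ($\fil^a\cdot\fil^b\subseteq\fil^{a+b}$, as the number of distinct divisors adds), it suffices to prove $p_C\in\fil^{m-1}\mc O_n^{\star T}$, for then $p_\Ga=q\,p_C\in\fil^{(s-m)+(m-1)}=\fil^{s-1}$. Writing the cycle forms as $\ell_k=z_{i_ki_{k+1}}$ (cyclic mod $m$, so $\sum_k\ell_k=0$), I argue by induction on $m$. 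The base case $m=2$ gives $\ell_2=-\ell_1$, whence $p_C=-\ell_1^{-2}$ has a single divisor.

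For the inductive step, I apply the identity $\frac{1}{\ell_m\ell_1}=\frac{1}{\ell_m+\ell_1}\bigl(\frac{1}{\ell_m}+\frac{1}{\ell_1}\bigr)$, whose new divisor $\ell_m+\ell_1=z_{i_m}-z_{i_2}=z_{i_mi_2}$ is again a genuine coordinate difference, hence lies in $\mc O_n^{\star T}$. Multiplying by $\prod_{k=2}^{m-1}\ell_k^{-1}$ yields
\[
p_C=\frac{1}{z_{i_mi_2}}\,\frac{1}{\ell_2\cdots\ell_{m-1}\ell_m}+\frac{1}{z_{i_mi_2}}\,\frac{1}{\ell_1\ell_2\cdots\ell_{m-1}}.
\]
In each summand the divisors $z_{i_mi_2},\ell_2,\dots,\ell_{m-1}$ satisfy $z_{i_mi_2}+\ell_2+\dots+\ell_{m-1}=0$, i.e.\ they form the product over the length-$(m-1)$ cycle $i_m\to i_2\to i_3\to\cdots\to i_{m-1}\to i_m$; by the induction hypothesis this sub-product lies in $\fil^{m-2}$, and the single leftover factor ($\ell_m^{-1}$, resp.\ $\ell_1^{-1}$) pushes each summand into $\fil^{m-1}$. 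As $p_C$ is manifestly translation invariant, this places it in $\fil^{m-1}\mc O_n^{\star T}$, and the factorization above completes (a).

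The main obstacle is the bookkeeping in the inductive step: one must verify that the Arnold identity genuinely \emph{lowers} the number of distinct divisors rather than merely trading one for another, which is exactly what recognizing the reduced factor as a product over a strictly shorter cycle accomplishes, and that every new divisor stays inside $\mc O_n^{\star T}$. A secondary technical point is the degenerate case of a non-simple cycle, where $z_{i_mi_2}$ vanishes because $i_2=i_m$; there $\ell_m+\ell_1=0$ forces $\ell_m=-\ell_1$, so $\frac{1}{\ell_m\ell_1}=-\ell_1^{-2}$ already collapses two divisors into one and the reduction is immediate. Thus at each vertex of the cycle one either performs the Arnold step or, if the two incident forms cancel, collapses directly, and the induction goes through in all cases.
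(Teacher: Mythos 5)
Your proof is correct, and it is essentially the argument the paper relies on: the paper gives no inline proof but defers both parts to the proof of Lemma~8.4 in \cite{BDSHK}, where the same two ingredients appear — the telescoping identity $\sum_{e\in C}z_e=0$ for part (b), and for part (a) the reduction to the cycle factor $p_C$ together with induction on the cycle length via the partial-fraction (Arnold) identity $\frac{1}{z_{ij}z_{jk}}=\frac{1}{z_{ik}}\bigl(\frac{1}{z_{ij}}+\frac{1}{z_{jk}}\bigr)$. Your treatment of the degenerate case $i_2=i_m$ (where the identity would require dividing by zero, but the two forms cancel and the collapse is immediate) correctly closes the one gap such an induction must address.
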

\begin{proof}
The proofs of both statements are contained 
in the proof of \cite[Lem.\ 8.4]{BDSHK}.
\end{proof}
Let $V$ be filtered by $\mb F[\partial]$-submodules as in \eqref{eq:last3}. 
Then we have the filtered operad $P^\ch(V)$ associated to $V$ 
and the graded operad $P^\cl(\gr V)$ associated to the graded superspace $\gr V$. 
These two operads are related as follows \cite[Sec.\ 8]{BDSHK}.

Let $X\in\fil^r P^\ch(V)(n)$ and $\Gamma\in\mc G(n)$ be a graph with $s$ edges. Then for every $v\in\fil^t V^{\otimes n}$, we have
$v\otimes p_\Ga \in \fil^{s+t} ( V^{\otimes n} \otimes \mc O_{n}^{\star T} )$ and, by \eqref{fil4-ref},
\begin{equation}\label{gr2}
X_{\la_1,\dots,\la_n} (v\otimes p_\Ga) 
\in (\fil^{s+t-r} V)[\lambda_1,\dots,\lambda_n]/\langle\partial+\lambda_1+\dots+\lambda_n\rangle \,.
\end{equation}
We define $Y\in\gr^r P^\cl(\gr V)(n)$ by:
\begin{equation}\label{gr3}
\begin{split}
Y^\Ga_{\la_1,\dots,\la_n} \bigl( v&+\fil^{t-1} V^{\otimes n} \bigr)
= X_{\la_1,\dots,\la_n} (v\otimes p_\Ga) \\
&+ (\fil^{s+t-r-1} V)[\lambda_1,\dots,\lambda_n]/\langle\partial+\lambda_1+\dots+\lambda_n\rangle \\
&\in (\gr^{s+t-r} V)[\lambda_1,\dots,\lambda_n]/\langle\partial+\lambda_1+\dots+\lambda_n\rangle \,.
\end{split}
\end{equation}
Clearly, the right-hand side depends only 
on the image $\bar v = v+\fil^{t-1} V^{\otimes n} \in \gr^t V^{\otimes n}$ 
and not on the choice of representative $v\in\fil^t V^{\otimes n}$. 
We write \eqref{gr3} simply as
\begin{equation}\label{gr4}
Y^\Ga_{\la_1,\dots,\la_n} (\bar v)
= \overline{ X_{\la_1,\dots,\la_n} (v\otimes p_\Ga) } \,.
\end{equation}
The fact that $Y\in\gr^r P^\cl(\gr V)(n)$ was proved in \cite[Cor.\ 8.8]{BDSHK}. 

If $X\in\fil^{r+1} P^\ch(V)(n)$, then the right-hand side of \eqref{gr3} (or \eqref{gr4}) vanishes.
Thus, \eqref{gr3} defines a map
\begin{equation}\label{eq:map}
\gr^r P^\ch(V)(n)\,\to\,\gr^r P^\cl(\gr V)(n)
\,,\qquad 
\bar X=X+\fil^{r+1} \mapsto Y
\,.
\end{equation}
\begin{theorem}[{\cite{BDSHK}}] 
The map \eqref{eq:map} is an injective homomorphism of graded operads. 
\end{theorem}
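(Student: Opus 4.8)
The plan is to establish three things: that the map $\bar X \mapsto Y$ is well defined (already done above, by the independence of the representative), that it is a homomorphism of graded operads, and that it is injective. Since the operad structures on $P^\ch$ and $P^\cl$ are not even recalled in this paper, for the compatibility with $\circ_i$-products and the $S_n$-action I would simply cite the corresponding verification from \cite{BDSHK} (the composition maps on $P^\ch$ respect the filtration \eqref{fil4-ref} by \cite[Prop.\ 8.1]{BDSHK}, and the induced maps on associated graded match the classical compositions); the substantive content to prove here is injectivity, so I would concentrate the argument there.

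For injectivity, suppose $\bar X \in \gr^r P^\ch(V)(n)$ maps to $Y = 0$; I want to show $X \in \fil^{r+1} P^\ch(V)(n)$, i.e.\ that $X$ raises the filtration degree by at least $r+1$ rather than exactly $r$. By definition \eqref{gr4}, $Y = 0$ means that for every graph $\Ga \in \mc G(n)$ with $s$ edges and every $v \in \fil^t V^{\otimes n}$, the element $X_{\la_1,\dots,\la_n}(v \otimes p_\Ga)$ lands in $\fil^{s+t-r-1} V[\dots]/\langle\dots\rangle$, one degree deeper than the generic bound \eqref{gr2}. The key reduction is that the functions $p_\Ga$, as $\Ga$ ranges over $\mc G(n)$, together with the sesquilinearity moves, suffice to control $X$ on all of $V^{\otimes n} \otimes \mc O_n^{\star T}$. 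Concretely, I would argue that every $f \in \fil^p \mc O_n^{\star T}$ can be reduced, modulo the sesquilinearity relations \eqref{20160629:eq4a}--\eqref{20160629:eq4b} and modulo $\fil^{p-1}$, to an $\mb F[\partial]$-combination of the monomials $p_\Ga$ with $\Ga$ having exactly $p$ edges. The second sesquilinearity axiom \eqref{20160629:eq4b} lets me trade multiplication by $z_{ij}$ for the operator $(\partial_{\la_j}-\partial_{\la_i})$, so multiplying $p_\Ga$ by $z_{ij}$ does not increase the pole-count filtration and can be absorbed; the first axiom \eqref{20160629:eq4a} handles the $\partial_{z_i}$-derivatives that arise from numerators. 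Thus on the associated graded, $X$ is determined by its values on the $p_\Ga$, and $Y=0$ forces $X(\fil^s) \subset \fil^{s-r-1}V[\dots]$ for all $s$, which is exactly $X \in \fil^{r+1}$.

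The main obstacle is the spanning claim in the previous paragraph: that modulo lower filtration the image of $\fil^p \mc O_n^{\star T}$ in $\gr^p$ is spanned by the classes of the $p_\Ga$ with $\Ga \in \mc G(n)$ having $p$ edges. A generic element of $\fil^p \mc O_n^{\star T}$ is a sum of terms with a numerator polynomial times a product of at most $p$ distinct inverse factors $z_{ij}^{-1}$; I need that the numerator can be absorbed, modulo $\fil^{p-1}$, using sesquilinearity so that only the bare products $p_\Ga$ survive. This is precisely the combinatorial heart of the identification and is the graph-theoretic analogue of the pole-reduction already used in \cite[Lem.\ 8.4]{BDSHK} and recalled in Lemma \ref{lem:pgamma}; the cycle relations in Lemma \ref{lem:pgamma}(b) match the classical cycle relations \eqref{eq:cycle}, which is what guarantees the reduction respects the relations defining $P^\cl$. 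I would carry out this reduction first, then deduce injectivity formally, and leave the operad-homomorphism property to a citation of the corresponding computation in \cite{BDSHK}.
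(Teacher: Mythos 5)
This theorem is not actually proved in the present paper: it is imported from \cite[Thm.\ 10.12]{BDSHK}, the only verification offered here being that \eqref{gr3} is compatible with the $S_n$-action. So your proposal can only be measured against that reference, and your plan for injectivity is indeed the natural one and, in outline, the argument used there: if $\bar X\mapsto Y=0$, then $X_{\la_1,\dots,\la_n}(v\otimes p_\Ga)$ lands one step deeper in the filtration for every $\Ga$, and one propagates this to all of $V^{\otimes n}\otimes\mc O^{\star T}_n$ via the sesquilinearity axioms, the case $f\in\fil^{s-1}\mc O^{\star T}_n$ being free by \eqref{fil4-ref}. Your filtration bookkeeping is correct, though you should state explicitly that when \eqref{20160629:eq4a} replaces $v$ by $(\partial_i+\la_i)v$, this stays in $(\fil^t V^{\otimes n})[\la_i]$ precisely because \eqref{eq:last3} is a filtration by $\mb F[\partial]$-submodules; that is where this hypothesis enters, and your argument silently uses it. Deferring the operad-homomorphism part to \cite{BDSHK} is legitimate, since the compositions are not even defined in this paper.

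The genuine gap is the spanning claim that you yourself label ``the main obstacle'' and then leave unproved: that modulo $\fil^{s-1}\mc O^{\star T}_n$, every element of $\fil^{s}\mc O^{\star T}_n$ is reachable from the $p_\Ga$ with $s$ edges by multiplying by polynomials and applying the $\partial_{z_i}$. This is not a routine ``absorption of numerators''; it is the entire content of injectivity, and without it your argument establishes nothing. Two separate things must be proved. First, a monomial $\prod_k z_{i_kj_k}^{-m_k}$ with $s$ distinct pole divisors whose underlying graph contains a cycle must be shown to lie in $\fil^{s-1}\mc O^{\star T}_n$; this is a partial-fraction argument generalizing Lemma \ref{lem:pgamma}(a) from multiplicity one to arbitrary multiplicities. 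Second, for acyclic pole sets one must realize every multiplicity pattern starting from $p_\Ga$, and this is delicate because $\partial_{z_i}$ raises the order of \emph{every} pole involving $z_i$ at once, e.g.\ $\partial_{z_2}\bigl(z_{12}^{-1}z_{23}^{-1}z_{34}^{-1}\bigr) = z_{12}^{-2}z_{23}^{-1}z_{34}^{-1}-z_{12}^{-1}z_{23}^{-2}z_{34}^{-1}$, so isolating a prescribed term requires an induction (for instance over leaves of the forest of poles) rather than a one-line formula. Both points are exactly what the proof of \cite[Lem.\ 8.4]{BDSHK} supplies, so you should either prove them or cite them as the crux, not merely as an ``analogue.'' Note also that the machinery of the present paper offers a dual route to the same crux: Proposition \ref{lem:res4} (equivalently Lemma \ref{lem:Gfourier3}) shows that the $\Ga$-residues separate the functions $p_{\Ga'}$, which is what Theorem \ref{thm:main} exploits; but that argument, as written, yields surjectivity of \eqref{eq:map} and still quotes the present theorem for injectivity, so it cannot be used to shortcut your missing lemma.
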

We will not need the full statement here (see \cite[Thm.\ 10.12]{BDSHK}), but let us observe that \eqref{gr3} is compatible with the actions of the symmetric group $S_n$. 
In \cite[Rem.\ 10.15]{BDSHK}, we also posed the question whether the map \eqref{eq:map}
is an isomorphism.
The main result of the present paper is Theorem \ref{thm:main} below,
which says, in particular, that this is indeed the case under the assumption that $V$
is graded as an $\mb F[\partial]$-module.

\section{$\Gamma$-residues and $\Gamma$-Fourier transform}\label{sec:4}

\subsection{Lines}\label{sec:4.1}

Given a positive integer $n$, let $\mc L(n)\subset\mc G(n)$
be the set of graphs that are disjoint unions of lines,
i.e., graphs of the following form:
\begin{equation}\label{eq:Gamma}
\begin{tikzpicture}
\node at (-0.2,1) {$\Gamma=$};
\draw (0.5,1) circle [radius=0.07];
\node at (0.5,0.6) {$i^1_1$};
\draw[->] (0.6,1) -- (0.9,1);
\draw (1,1) circle [radius=0.07];
\node at (1,0.6) {$i^1_2$};
\draw[->] (1.1,1) -- (1.4,1);
\node at (1.7,1) {$\cdots$};
\draw[->] (1.9,1) -- (2.2,1);
\draw (2.3,1) circle [radius=0.07];
\node at (2.3,0.6) {$i^1_{k_1}$};
\draw (3,1) circle [radius=0.07];
\node at (3,0.6) {$i^2_1$};
\draw[->] (3.1,1) -- (3.4,1);
\draw (3.5,1) circle [radius=0.07];
\node at (3.5,0.6) {$i^2_2$};
\draw[->] (3.6,1) -- (3.9,1);
\node at (4.2,1) {$\cdots$};
\draw[->] (4.4,1) -- (4.7,1);
\draw (4.8,1) circle [radius=0.07];
\node at (4.8,0.6) {$i^2_{k_2}$};
\node at (5.45,1) {$\cdots$};
\draw (6,1) circle [radius=0.07];
\node at (6,0.6) {$i^p_1$};
\draw[->] (6.1,1) -- (6.4,1);
\draw (6.5,1) circle [radius=0.07];
\node at (6.5,0.6) {$i^p_2$};
\draw[->] (6.6,1) -- (6.9,1);
\node at (7.2,1) {$\cdots$};
\draw[->] (7.4,1) -- (7.7,1);
\draw (7.8,1) circle [radius=0.07];
\node at (7.8,0.6) {$i^p_{k_p}$};
\node at (9.7,1) {$=L_1\sqcup L_2\sqcup\dots\sqcup L_p\,,$};
\end{tikzpicture}
\end{equation}
where $k_1,\dots,k_p\geq1$ are such that $k_1+\dots+k_p=n$,
and the set of indices $\{i^a_b\}$ is a permutation of $\{1,\dots,n\}$ such that
\begin{equation}\label{eq:Gcond}
i^1_1=1\,<\,i^2_1\,<\,\cdots\,<\,i^p_1
\,,\qquad
i^\ell_1=\min\{i^\ell_1,\dots,i^\ell_{k_\ell}\}
\,,\,\,\ell=1,\dots,p
\,.
\end{equation}
In \eqref{eq:Gamma}, $L_r$ denotes the $r$-th connected component of $\Gamma$
(which is a connected oriented line of length $k_r$).
For example, 
when $k_r=1$ the line $L_r$ consists of the single vertex indexed $i^r_1$.
We also denote by $\mc L(n,p)\subset\mc L(n)$
the subset of graphs $\Gamma$ as in \eqref{eq:Gamma}
with the fixed number $p$ of connected components.

Consider the vector space $\mb F\mc G(n)$ linearly spanned by the set $\mc G(n)$.
The \emph{cycle relations} in $\mb F\mc G(n)$ are the following elements:
\begin{enumerate}[(i)]
\item
all graphs $\Gamma\in\mc G(n)$ containing a cycle;
\item
all linear combinations of the form
$\sum_{e\in C}\Gamma\backslash e$,
for $\Gamma\in\mc G(n)$
and all oriented cycles $C\subset E(\Gamma)$.
\end{enumerate}
Note that if we reverse an arrow in a graph $\Gamma\in\mc G(n)$,
we obtain, modulo cycle relations, the element $-\Gamma\in\mb F\mc G(n)$.
\begin{lemma}\label{prop:basis}
The set $\mc L(n)$ spans the space $\mb F\mc G(n)$ modulo
the cycle relations.
\end{lemma}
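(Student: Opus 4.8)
The plan is to build a rewriting procedure that, modulo the cycle relations, reduces an arbitrary $\Ga\in\mc G(n)$ to a linear combination of elements of $\mc L(n)$. I would first record that the cycle relations are homogeneous in the number of edges: relation (i) kills a single graph, and in relation (ii) every term $\Ga\backslash e$ has exactly $|E(\Ga)|-1$ edges. Hence $\mb F\mc G(n)$ modulo the cycle relations is graded by $|E(\Ga)|$, and one may argue within a fixed edge number. Next, any $\Ga$ containing a cycle is $0$ by (i), so I may assume $\Ga$ is acyclic. Besides the reversal rule already noted (reversing one arrow negates the class), the key tool is the \emph{cherry resolution}: for distinct vertices $v,a,b$,
\[
\{v\to a,\,v\to b\}\ \equiv\ \{v\to a,\,a\to b\}+\{v\to b,\,b\to a\}
\]
modulo cycle relations, together with its transpose for two arrows sharing a head. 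This follows by reversing $v\to a$, closing the resulting path $a\to v\to b$ into the $3$-cycle $\{a\to v,\,v\to b,\,b\to a\}$, applying (ii) to that cycle, and straightening the two remaining terms by reversal. Thus a vertex of out-degree $\ge 2$ can be \emph{combed}: its outgoing star is rewritten as a sum of directed chains through its former out-neighbours, and symmetrically for in-degree.

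The reduction itself would proceed in stages. First I would reverse arrows so that every arrow points from the smaller to the larger label (\emph{forward orientation}); this is harmless because an acyclic graph has at most one arrow between any pair of vertices. On forward graphs each cherry resolution keeps the graph forward and strictly decreases the potential $\Phi(\Ga)=\sum_{x\to y}M^{\,n-x}$ for any fixed $M>1$, since resolving an out-cherry at $v$ replaces an arrow out of $v$ by an arrow out of a strictly larger label. As $\Phi\ge0$ takes only finitely many values, combing out all vertices of out-degree $\ge 2$ terminates, leaving a sum of forward graphs in which every out-degree is $\le 1$.

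It then remains to eliminate in-degree branching and to normalize each component. Resolving an in-cherry trades it (via reversal) for a directed path together with a cherry sitting at strictly smaller labels, and in combination with the three-term relation $(\text{path with middle }a)+(\text{path with middle }b)+(\text{path with middle }v)=0$ — the form taken by (ii) on the $3$-cycle over $\{v,a,b\}$ — every graph is rewritten as a combination of disjoint unions of directed paths, i.e.\ linear forests. Finally, reversal orients each path from the least of its vertices when that vertex is an endpoint; the three-term relation handles the case of an interior minimum, as in $2\to1\to3=-(1\to2\to3)-(1\to3\to2)$; and reordering the components by their least vertices yields exactly the normalization \eqref{eq:Gcond} defining $\mc L(n)$.

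The main obstacle is the termination (confluence) of the \emph{full} procedure: resolving in-cherries recreates out-cherries and conversely, so no single monotone quantity decreases throughout. I expect to control this with a well-founded lexicographic complexity whose first coordinate is the forward out-branching potential $\Phi$ above, with subsequent coordinates measuring the residual in-branching and the interior-minimum defect, and to verify that each relation used strictly lowers it. The delicate point is precisely that the branching each move creates is situated ``downstream'' (at larger, resp.\ smaller, labels), so one must check it is strictly cheaper in the chosen order; once this is established the lemma follows, since every graph is reduced to a combination of elements of $\mc L(n)$.
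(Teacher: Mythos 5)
Your reduction moves are the right ones (the ``cherry resolution'' is exactly the $3$-cycle relation (ii) combined with edge reversals, the same local move the paper uses), but your argument has a genuine gap, and you have named it yourself: the termination of the full rewriting procedure is never established. In a proof by rewriting to normal form, the termination order \emph{is} the content of the proof. As you observe, resolving an in-cherry creates an out-cherry and conversely; moreover, your final normalization stage has the same defect: straightening an interior minimum $\cdots a\to m\to b\cdots$ on a path of length $>2$ via the $3$-cycle on $\{a,m,b\}$ introduces an edge $b\to a$ and hence a new branch vertex, so even that stage feeds back into the unproved termination problem. The lexicographic complexity you ``expect'' to work is not constructed, and no verification is given that each move strictly decreases it; until that is done, the proof does not exist.

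The paper avoids this issue entirely by localizing the cherry resolutions at a single distinguished vertex, namely the vertex $1$, and wrapping them in a double induction. Each resolution of a cherry at vertex $1$ replaces one of the two edges at $1$ by an edge joining the two former neighbours of $1$; the new edge does not touch vertex $1$, so the number of edges incident to $1$ strictly drops, and this inner induction terminates trivially --- the ``downstream'' branching you worry about is simply ignored at this stage. Once vertex $1$ is a leaf (or isolated), one deletes it, notes that cycle relations in $\mc G(n-1)$ embed into cycle relations in $\mc G(n)$, and applies induction on $n$ to the remaining graph $\Gamma'$, reattaching vertex $1$ at the head of the line through its unique neighbour; the normalization \eqref{eq:Gcond} then comes for free. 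If you want to salvage your global approach you must exhibit a concrete well-founded measure and check it decreases under \emph{every} move used (out-resolution, in-resolution, reversal, interior-minimum straightening); the far easier repair is to restrict all resolutions to vertex $1$ and induct on $n$, as the paper does.
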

\begin{proof}
Let $\Gamma\in\mc G(n)$.
First, we claim that, modulo cycle relations,
we can assume that the vertex $1$ is a leaf,
i.e., there is no more than one edge in or out of it.
Indeed, if there are $\ell\geq2$ edges in or out of $1$, then
up to reversing arrows 
(i.e., up to a sign modulo cycle relations),
we can assume that there are two edges as follows:
$$
\begin{tikzpicture}
\draw (1.8,1) circle [radius=0.07];
\draw[->] (1.9,1) -- (2.2,1);
\draw (2.3,1) circle [radius=0.07];
\node at (2.3,0.6) {$1$};
\draw[->] (2.4,1) -- (2.7,1);
\draw (2.8,1) circle [radius=0.07];
\end{tikzpicture}
$$
Then, modulo cycle relations, this is equivalent to 
$$
\begin{tikzpicture}
\draw (1.8,1) circle [radius=0.07];
\draw[->] (1.9,1) -- (2.2,1);
\draw (2.3,1) circle [radius=0.07];
\node at (2.3,0.6) {$1$};
\draw[->] (2.4,1) -- (2.7,1);
\draw (2.8,1) circle [radius=0.07];
\node at (3.4,1) {$\equiv-$};
\draw (4,1) circle [radius=0.07];
\draw (4.5,1) circle [radius=0.07];
\node at (4.5,0.6) {$1$};
\draw[->] (4.6,1) -- (4.9,1);
\draw (5,1) circle [radius=0.07];
\draw[<-] (4,1.1) to [out=90,in=90] (5,1.1);
\node at (5.5,1) {$-$};
\draw (6,1) circle [radius=0.07];
\draw[->] (6.1,1) -- (6.4,1);
\draw (6.5,1) circle [radius=0.07];
\node at (6.5,0.6) {$1$};
\draw (7,1) circle [radius=0.07];
\draw[<-] (6,1.1) to [out=90,in=90] (7,1.1);
\end{tikzpicture}
$$
Hence, $\Gamma$ is equivalent
to a linear combination of graphs in which there are $\ell-1$ edges in or out of the vertex $1$.
Proceeding by induction, we get the claim.

Next, suppose that $1$ is a leaf of $\Gamma$ connected with an edge to the vertex $i$
(if $1$ is an isolated vertex, let $i=2$).
Denote by $\Gamma'\in\mc G(n-1)$
the subgraph of $\Gamma$ obtained by deleting the vertex $1$ and any edge attached to it.
Notice that, under the natural embedding of $\mc G(n-1)$ into $\mc G(n)$, every cycle relation in $\mc G(n-1)$ corresponds to a cycle relation in $\mc G(n)$.
By induction on $n$, $\Gamma'$ is equivalent, modulo cycle relations,
to a disjoint union of lines, one of which starts at the vertex $i$
and the others satisfy the conditions \eqref{eq:Gcond}.
Then $\Gamma$ is also a disjoint union of lines,
one of which starts with $1$.
This completes the proof.
\end{proof}
\begin{remark}\label{rem:lines}
In fact, in Theorem \ref{lem:res5} below
we will prove that the set $\mc L(n)$ is a basis for $\mb F\mc G(n)/R(n)$,
where $R(n)$ is the subspace spanned by the cycle relations.
\end{remark}

\subsection{$\Gamma$-residues}\label{sec:4.2}

Given $i\neq j\in\{1,\dots,n\}$, 
we define the residue map
\begin{equation}\label{eq:res}
\Res_{z_j}\!dz_i\colon
\mc O^{\star}_n(z_1,\dots,z_n)\to\mc O^{\star}_{n-1}(z_1,\stackrel{i}{\check{\dots}},z_n)
\end{equation}
where $\stackrel{i}{\check{\dots}}$ means that the variable $z_i$ is skipped.
It is defined as the residue of a function $f(z_1,\dots,z_n)$,
viewed as a function of $z_i$, at $z_i=z_j$,
and is given by Cauchy's formula.
Explicitly, let
\begin{equation}\label{eq:fg}
f(z_1,\dots,z_n)
=
z_{ij}^{-\ell-1} g(z_1,\dots,z_n)
\,\in\mc O^{\star}_n
\,,
\end{equation}
where $\ell\in\mb Z$ and $g$ has neither a zero nor a pole at $z_i=z_j$.
Then
\begin{equation}\label{eq:cauchy}
\Res_{z_j}\!dz_i\, f(z_1,\dots,z_n)
=
\frac1{\ell!}\frac{\partial^\ell g}{\partial z_i^\ell}(z_1,\dots,{\smallunderbrace{z_j}_i},\dots,z_n)
\,\,\text{ if }\,\, \ell\geq0
\,,
\end{equation}
and it is zero for $\ell<0$.

Next, given a line $L=i_1\to i_2\to\dots\to i_k$, we define the map
\begin{equation}\label{eq:resL1}
\Res_{w} dL
\colon
\mc O^{\star}_n(z_1,\dots,z_n)
\to
\mc O^{\star}_{n-k+1}(z_1,\stackrel{i_1\dots i_k}{\check{\dots}},z_n,w)
\,,
\end{equation}
given by
\begin{equation}\label{eq:resL2}
\Res_{w} dL
\, f(z_1,\dots,z_n)
=
\Res_{z_{i_k}}\!dz_{i_{k-1}}
\cdots
\Res_{z_{i_2}}\!dz_{i_1}
\, f(z_1,\dots,z_n)
\,\Big|_{z_{i_k}=w}
\,.
\end{equation}
For example, if $L$ is a single vertex $i$ (i.e., $k=1$),
then the residue map \eqref{eq:resL1} is just the substitution $z_i=w$,
while if $L=i\to j$ is of length $2$,
then we recover the residue map \eqref{eq:res}:
$$
\Res_{z_j}\!dL
\, f(z_1,\dots,z_n)
=
\Res_{z_j}\!dz_i\, f(z_1,\dots,z_n)
\,.
$$

Finally, let $\Gamma\in\mc L(n)$ be a disjoint union of lines $L_1\sqcup\dots\sqcup L_p$
as in \eqref{eq:Gamma}.
In this case, we define the $\Gamma$-\emph{residue} map
\begin{equation}\label{eq:resG1}
\Res_{w_1,\dots,w_p}\! d\Gamma
\colon
\mc O^{\star}_n(z_1,\dots,z_n)
\to
\mc O^{\star}_{p}(w_1,\dots,w_p)
\,,
\end{equation}
given by
\begin{equation}\label{eq:resG2}
\Res_{w_1,\dots,w_p}\! d\Gamma
=
\Res_{w_1}\! dL_1
\circ\dots\circ
\Res_{w_p}\! dL_p
\,.
\end{equation}
Note that, by definition, we have
\begin{equation}\label{eq:resGmany}
\Res_{w_1,\dots,w_p}\! d\Gamma (z_i f)
=
w_\ell \Res_{w_1,\dots,w_p}\! d\Gamma f
\,\,\text{ if }\,\,
i=i^\ell_{k_\ell},\,\,1\leq\ell\leq p
\,.
\end{equation}
In the following lemmas we list some elementary properties of the $\Gamma$-residue maps,
which will be needed later.
\begin{lemma}\label{lem:res1}
For every\/ $\Gamma\in\mc L(n)$,
the\/ $\Gamma$-residue map \eqref{eq:resG1}
preserves the translation invariance of functions,
i.e.,
$$
\Res_{w_1,\dots,w_s}\! d\Gamma
\colon
\mc O^{\star,T}_n(z_1,\dots,z_n)
\to
\mc O^{\star,T}_p(w_1,\dots,w_p)
\,.
$$
\end{lemma}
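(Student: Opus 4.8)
The plan is to reduce the whole statement to a single commutation identity for an elementary residue and then iterate. Write $D_n=\sum_{k=1}^n\partial_{z_k}$ for the total translation vector field, so that by definition $\mc O^{\star T}_n=\Ker D_n$. Since $\Res_{w_1,\dots,w_p}d\Gamma$ is, by \eqref{eq:resG2}, a composition of the line-residues \eqref{eq:resL2}, and each line-residue is by \eqref{eq:resL2} an iterated elementary residue \eqref{eq:res} followed by the renaming $z_{i_k}=w$ of a single variable, it suffices to prove the following single-step statement: for $i\neq j$,
\begin{equation}\label{plan:commute}
\Res_{z_j}\!dz_i \circ D_n \;=\; \Big(\sum_{k\neq i}\partial_{z_k}\Big)\circ \Res_{z_j}\!dz_i ,
\end{equation}
where the operator on the right is the total translation field on the $n-1$ remaining variables. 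Granting \eqref{plan:commute}, the image of a function in $\Ker D_n$ under one elementary residue again lies in the kernel of the total translation field of the target.

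First I would prove \eqref{plan:commute} by Laurent expansion. Writing $f=\sum_{\ell}c_\ell\,z_{ij}^{\ell}$ in the variable $z_{ij}=z_i-z_j$ with coefficients $c_\ell$ independent of $z_i$, the formula \eqref{eq:cauchy} gives $\Res_{z_j}\!dz_i f=c_{-1}$. For each $k\notin\{i,j\}$ the derivative $\partial_{z_k}$ acts only on the coefficients and hence commutes with extracting $c_{-1}$. The term $k=i$ gives $\partial_{z_i}f=\sum_\ell \ell\,c_\ell\,z_{ij}^{\ell-1}$, whose $z_{ij}^{-1}$-coefficient is $0\cdot c_0=0$; that is, the residue of a $\partial_{z_i}$-derivative vanishes. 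The delicate term is $k=j$, where both $c_\ell$ and $z_{ij}^{\ell}$ depend on $z_j$: here $\partial_{z_j}f=\sum_\ell(\partial_{z_j}c_\ell)z_{ij}^\ell-\sum_\ell \ell\,c_\ell\,z_{ij}^{\ell-1}$, and the second sum again contributes $0$ to the $z_{ij}^{-1}$-coefficient, leaving $\partial_{z_j}c_{-1}=\partial_{z_j}\Res_{z_j}\!dz_i f$. Summing the three cases yields \eqref{plan:commute}. I expect this $k=j$ computation to be the only genuine subtlety, since it is the one place where the pole location itself moves under translation; conceptually it reflects the fact that the residue is equivariant under simultaneous translation of all variables.

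It remains to assemble the pieces. For a single line $L=i_1\to\dots\to i_k$ I would apply \eqref{plan:commute} successively to the residues $\Res_{z_{i_2}}\!dz_{i_1},\dots,\Res_{z_{i_k}}\!dz_{i_{k-1}}$ appearing in \eqref{eq:resL2}: starting from $f\in\Ker D_n$, each step deletes one variable and preserves translation invariance in those remaining, so after all $k-1$ steps the output is translation invariant in $z_{i_k}$ together with the variables off $L$. The final substitution $z_{i_k}=w$ is a mere renaming and carries this to translation invariance in $w$. Finally, for $\Gamma=L_1\sqcup\dots\sqcup L_p$ I would compose the line-residues as in \eqref{eq:resG2}; because the lines partition $\{1,\dots,n\}$, each factor acts on a fresh set of $z$-variables while the previously produced $w_\ell$ behave as ordinary translation variables, so invariance is preserved at every stage. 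After all $p$ factors the result depends only on $w_1,\dots,w_p$ and lies in $\Ker\big(\sum_{\ell=1}^p\partial_{w_\ell}\big)=\mc O^{\star T}_p$, which is exactly the assertion.
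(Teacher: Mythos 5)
Your proof is correct and follows the same route as the paper: the paper's proof simply reduces to the elementary residue map \eqref{eq:res} and declares that case obvious, which is exactly your reduction via \eqref{eq:resL2}--\eqref{eq:resG2}. Your Laurent-coefficient computation of the commutation identity $\Res_{z_j}\!dz_i \circ D_n = \bigl(\sum_{k\neq i}\partial_{z_k}\bigr)\circ \Res_{z_j}\!dz_i$ is just a careful write-up of the step the paper leaves implicit, and it is accurate, including the cancellation in the delicate $k=j$ term.
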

\begin{proof}
It is enough to prove it for the map \eqref{eq:res},
in which case it is obvious.
\end{proof}
\begin{lemma}\label{lem:res2}
Let $\Gamma\in\mc L(n)$ be a graph as in \eqref{eq:Gamma};
in particular, $|E(\Gamma)|=n-p$.
Then
\begin{equation}\label{eq:ciao2}
\Res_{w_1,\dots,w_p}\! d\Gamma
\colon
\fil^r\mc O^{\star}_n(z_1,\dots,z_n)
\to
\fil^{r+p-n}\mc O^{\star}_p(w_1,\dots,w_p)
\,.
\end{equation}
In particular,
\begin{equation}\label{eq:ciao3}
\Res_{w_1,\dots,w_p}\! d\Gamma
(\fil^r\mc O^{\star}_n)
=0
\,\,\text{ for }\,\,
r<|E(\Gamma)|
\,.
\end{equation}
\end{lemma}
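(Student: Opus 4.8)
The plan is to reduce everything to the single-edge residue map \eqref{eq:res} and track how it shifts the filtration by number of poles. Recall that $\Gamma\in\mc L(n)$ with $p$ connected components has exactly $n-p$ edges, so the claimed filtration shift in \eqref{eq:ciao2}, namely $r\mapsto r+p-n = r-|E(\Gamma)|$, is precisely a drop by one unit for each edge. Since $\Res_{w_1,\dots,w_p}d\Gamma$ is a composition of the line-residues $\Res_{w_\ell}dL_\ell$, and each of those is in turn a composition of $k_\ell-1$ elementary residues $\Res_{z_{i_{b+1}}}\!dz_{i_b}$ (followed by a harmless substitution $z_{i_{k_\ell}}=w_\ell$), it suffices by induction on the number of edges to prove the single statement
\begin{equation}\label{eq:plan-single}
\Res_{z_j}\!dz_i\colon \fil^r\mc O^{\star}_n(z_1,\dots,z_n)\to \fil^{r-1}\mc O^{\star}_{n-1}(z_1,\stackrel{i}{\check\dots},z_n)\,,
\end{equation}
i.e.\ that taking a residue drops the pole-count filtration by exactly one. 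The substitution $z_{i_{k_\ell}}=w_\ell$ renaming a variable clearly does not increase the number of divisors, so it causes no trouble.

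To establish \eqref{eq:plan-single}, I would argue directly on a spanning set of $\fil^r\mc O^{\star}_n$. By definition of the filtration \eqref{fil1}, an element of $\fil^r\mc O^{\star}_n$ is a sum of rational functions each having at most $r$ distinct pole-divisors, so by linearity of the residue it is enough to treat a single monomial-type term $f = g\prod_{(a,b)\in S} z_{ab}^{-m_{ab}}$ with $g\in\mc O_n$ polynomial and $|S|\le r$. I split into two cases according to whether the variable $z_i$ being integrated actually occurs among the pole divisors. If none of the divisors in $S$ involves $z_i$, then $f$, viewed as a function of $z_i$, has at most a pole at $z_i=z_j$ coming only from a possible $z_{ij}^{-1}$ factor already counted in $S$; applying \eqref{eq:cauchy} differentiates and substitutes $z_i=z_j$, which can only merge divisors and never creates a new pole-divisor, so the output lies in $\fil^{|S|-1}$ after accounting for the divisor that was consumed. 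The key bookkeeping point is that the very divisor $z_{ij}$ at which we take the residue is eliminated by the substitution, decreasing the distinct-divisor count by at least one.

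The main obstacle, and the case deserving real care, is when several distinct divisors involve $z_i$, for instance factors $z_{ij}^{-1}$ together with $z_{ik}^{-1}$ for $k\neq j$. After the substitution $z_i=z_j$ in \eqref{eq:cauchy}, a divisor $z_{ik}$ becomes $z_{jk}$, which is a genuine divisor among the remaining variables and must be counted; meanwhile the differentiation $\partial_{z_i}^\ell$ can raise multiplicities of such factors but, crucially, cannot increase the \emph{number} of distinct divisors, since $\partial_{z_i}$ applied to $z_{ik}^{-m}$ produces $z_{ik}^{-m-1}$ with the same divisor. Thus the set of divisors after the residue is contained in the image under $z_i\mapsto z_j$ of $S\setminus\{z_{ij}\}$, together possibly with divisors already present not involving $z_i$; this image has at most $|S|-1$ elements because the pole at $z_i=z_j$ is exactly the one removed by Cauchy's formula. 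Carefully verifying that no new divisor sneaks in here — in particular that $g$ being regular at $z_i=z_j$ contributes no pole — is the crux, after which \eqref{eq:ciao2} follows by composing the edge-by-edge estimate and \eqref{eq:ciao3} is the immediate special case $r<|E(\Gamma)|=n-p$, for which the target filtration degree $r+p-n$ is negative and hence $\fil^{r+p-n}\mc O^{\star}_p=\{0\}$.
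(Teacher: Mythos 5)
Your proof is correct and takes essentially the same route as the paper's: both reduce, via the definition \eqref{eq:resL2}--\eqref{eq:resG2}, to showing that a single residue $\Res_{z_j}\!dz_i$ maps $\fil^r\mc O^{\star}_n$ into $\fil^{r-1}\mc O^{\star}_{n-1}$, and both deduce this from Cauchy's formula \eqref{eq:cauchy} by noting that the divisor $z_{ij}$ is consumed while differentiation and the substitution $z_i=z_j$ cannot increase the number of distinct pole divisors. Your case analysis just spells out the bookkeeping the paper declares ``immediate,'' and your derivation of \eqref{eq:ciao3} (the target filtration degree is negative, hence zero) matches the paper's.
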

\begin{proof}
By the definition \eqref{eq:resL2}-\eqref{eq:resG2} of the $\Gamma$-residue map,
it is enough to prove that
$$
\Res_{z_j}\!dz_i
\colon
\fil^r\mc O^{\star}_n(z_1,\dots,z_n)
\to
\fil^{r-1}\mc O^{\star}_{n-1}(z_1,\stackrel{i}{\check{\dots}},z_n)
\,.
$$
This is immediate, by Cauchy's formula \eqref{eq:cauchy}.
Indeed, if $f\in\fil^r\mc O^{\star}_n$ is as in \eqref{eq:fg} with $\ell\geq0$,
then $g\in\fil^{r-1}\mc O^{\star}_n$,
and hence 
the right-hand side of \eqref{eq:cauchy} lies in $\fil^{r-1}\mc O^{\star}_{n-1}$.
By induction, we get \eqref{eq:ciao2}.
Equation \eqref{eq:ciao3} is an obvious consequence of \eqref{eq:ciao2}.
\end{proof}
\begin{lemma}\label{lem:res3}
Let $\Gamma\in\mc L(n)$ be as in \eqref{eq:Gamma}.
For a function $f\in\mc O^{\star}_n$,
and $i\in\{1,\dots,n\}$, we have
\begin{equation}\label{eq:ciao1}
\Res_{w_1,\dots,w_p}\! d\Gamma \,
(\partial_{z_i} f)
=
\begin{cases}
\displaystyle{
\partial_{w_\ell}\Res_{w_1,\dots,w_p}\! d\Gamma\, f
}
&\text{ if }\,\, i=i^\ell_{k_\ell}\,,\,\,1\leq\ell\leq p\,,\\
0&\text{ if }\,\, i\not\in\{i^1_{k_1},\dots,i^p_{k_p}\}\,.
\end{cases}
\end{equation}
\end{lemma}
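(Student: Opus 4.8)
The plan is to reduce the whole computation to the single elementary residue $\Res_{z_j}\!dz_i$ of \eqref{eq:res} and then propagate the result through the inductive definitions \eqref{eq:resL2} and \eqref{eq:resG2}. The key input is the pair of commutation rules
\begin{equation*}
\Res_{z_j}\!dz_i\,(\partial_{z_m} f)
=
\begin{cases}
0 & m=i,\\
\partial_{z_m}\,\Res_{z_j}\!dz_i\, f & m\neq i,
\end{cases}
\end{equation*}
valid for every $f\in\mc O^\star_n$ and every index $m$; everything else is a matter of bookkeeping.

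I would prove these two rules from Cauchy's formula \eqref{eq:cauchy} after the change of variable $u=z_{ij}$, writing $f=\sum_m a_m u^m$ with coefficients $a_m$ not involving $z_i$, so that $\Res_{z_j}\!dz_i\,f=a_{-1}$. Since $\partial_{z_i}=\partial_u$ at fixed $z_j$, the expansion of $\partial_{z_i}f$ carries no $u^{-1}$ term, giving the first case. For $m\neq i,j$ the variable $z_m$ is an inert parameter, so $\partial_{z_m}$ passes straight through the residue. The only case requiring care is $m=j$, where the pole location itself moves with $z_j$: here one uses $\partial_{z_j}\big|_{z_i}=\partial_{z_j}\big|_{u}-\partial_u$, and the extra term $-\partial_u$ again contributes nothing to the $u^{-1}$ coefficient, leaving $\partial_{z_j}a_{-1}$. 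This pole-variable identity is the one genuinely nontrivial point in the argument.

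Next I would run these rules through the line residue $\Res_w dL$ of \eqref{eq:resL2} for a line $L=i_1\to\cdots\to i_k$, tracing $\partial_{z_{i_r}}$ as the residues are applied from the right. If $r<k$, then $\partial_{z_{i_r}}$ commutes past all the residues integrating $z_{i_1},\dots,z_{i_{r-1}}$ (by the $m\neq i$ rule, with $z_{i_r}$ acting first as an inert variable and then as a pole location), and it is annihilated the instant $z_{i_r}$ is integrated out by $\Res_{z_{i_{r+1}}}\!dz_{i_r}$ (the $m=i$ rule). If $r=k$, the variable $z_{i_k}$ is never integrated, so $\partial_{z_{i_k}}$ commutes past every residue, and the final substitution $z_{i_k}=w$ converts it into $\partial_w$. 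Hence $\Res_w dL\,(\partial_{z_{i_r}} f)$ equals $\partial_w\,\Res_w dL\, f$ when $r=k$ and vanishes otherwise.

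Finally I would assemble the $\Gamma$-residue \eqref{eq:resG2}. Since the lines $L_1,\dots,L_p$ of $\Gamma$ use pairwise disjoint sets of variables, for $i$ belonging to $L_\ell$ the derivative $\partial_{z_i}$ commutes freely with every factor $\Res_{w_m}\!dL_m$ with $m\neq\ell$, reducing the computation to the single line $L_\ell$. The line case then yields $\partial_{w_\ell}\,\Res_{w_1,\dots,w_p}\!d\Gamma\, f$ exactly when $i=i^\ell_{k_\ell}$ is the terminal vertex of its line, and $0$ when $i$ is any non-terminal vertex, i.e.\ precisely when $i\notin\{i^1_{k_1},\dots,i^p_{k_p}\}$. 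This is \eqref{eq:ciao1}.
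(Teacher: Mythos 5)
Your proposal is correct and follows essentially the same route as the paper: both reduce to the Laurent expansion of $f$ in $z_{ij}$, extract the two elementary commutation rules (the paper's \eqref{eq:cauchy2}--\eqref{eq:cauchy3}, including the $m=j$ pole-location case that you treat via the change of variable $u=z_{ij}$), and then propagate them through the definitions \eqref{eq:resL2}--\eqref{eq:resG2}. The only difference is that you spell out the line-by-line bookkeeping that the paper dismisses as an ``immediate consequence.''
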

\begin{proof}
If $f(z_1,\dots,z_n)\in\mc O^{\star}_n$ is as in \eqref{eq:fg},
then by Taylor expanding $g$, viewed as a rational function in $z_i$,
at $z_i=z_j$, we have
$$
f(z_1,\dots,z_n)
=
\sum_{m=-\ell-1}^\infty z_{ij}^m
f_m(z_1,\stackrel{i}{\check{\dots}},z_n)
\,,
\quad
f_m=\frac{1}{(m+\ell+1)!}\frac{\partial^{m+\ell+1}g}{\partial z_i^{m+\ell+1}}\Big|_{z_i=z_j}
\,.
$$
Then, by Cauchy's formula \eqref{eq:cauchy}, we have
\begin{equation}\label{eq:cauchy2}
\Res_{z_j}\!dz_i\/ f
=
f_{-1}
\,.
\end{equation}
It follows from \eqref{eq:cauchy2} that
\begin{equation}\label{eq:cauchy3}
\Res_{z_j}\!dz_i\/ \frac{\partial f}{\partial z_i}=0
\,,\,\text{ and }\,
\Res_{z_j}\!dz_i\/ \frac{\partial f}{\partial z_k}
=
\frac{\partial}{\partial z_k}
\Res_{z_j}\!dz_i\/ f
\,\,\,\,\text{ if } k\neq i
\,.
\end{equation}
Equation \eqref{eq:ciao1} is an immediate consequence of \eqref{eq:cauchy3}
and the definition \eqref{eq:resL2}-\eqref{eq:resG2} of the $\Gamma$-residue.
\end{proof}
\begin{proposition}\label{lem:res4}
Let\/ $\Gamma,\Gamma'\in\mc L(n)$
be such that\/ $|E(\Gamma')|=|E(\Gamma)|$.
Then, for every $q\in\mc O_n$, we have
$$
\Res_{w_1,\dots,w_p}\! d\Gamma\, 
p_{\Gamma'}(z_1,\dots,z_n)
q(z_1,\dots,z_n)
=
\delta_{\Gamma,\Gamma'}
q(z_1,\dots,z_n)\big|_{z_{i^a_b}=w_a\,\forall a,b}
\,.
$$
\end{proposition}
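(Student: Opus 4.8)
The plan is to evaluate the iterated residue defining $\Res_{w_1,\dots,w_p}d\Gamma$ one simple residue at a time by means of Cauchy's formula \eqref{eq:cauchy2}, setting up an induction that produces the Kronecker delta and the substitution at once. Two preliminary remarks fix the shape of the answer. Since a graph in $\mc L(n)$ with $p$ components has exactly $n-p$ edges, the hypothesis $|E(\Gamma')|=|E(\Gamma)|$ forces $\Gamma'$ to have the same number $p$ of components, so both sides live over $\mc O^{\star}_p(w_1,\dots,w_p)$; and since $q$ is polynomial, $p_{\Gamma'}q\in\fil^{n-p}\mc O^{\star}_n$, whence by Lemma \ref{lem:res2} the left-hand side lies in $\fil^0\mc O^{\star}_p=\mc O_p$, i.e.\ it is a polynomial, as the right-hand side must be. The entire content is thus combinatorial: which of the $n-p$ simple poles of $p_{\Gamma'}$ get consumed by the $n-p$ residue steps prescribed by $\Gamma$.

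The diagonal case $\Gamma=\Gamma'$ I would treat directly, by telescoping. Because $p_\Gamma$ factors over the connected components with disjoint variables (see \eqref{gr1}), it suffices to follow one line $L=i_1\to\cdots\to i_k$. Its residue \eqref{eq:resL2} applies $\Res_{z_{i_2}}dz_{i_1}$ first; the only factor of $p_\Gamma$ containing $z_{i_1}$ is the simple pole $z_{i_1i_2}^{-1}$ (the vertex $i_1$ is a source), so by \eqref{eq:cauchy2} this step removes that factor and substitutes $z_{i_1}=z_{i_2}$. The next step does the same with $z_{i_2i_3}^{-1}$, and so on, so that $z_{i_1},\dots,z_{i_k}$ are successively identified and finally set equal to $w$. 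Performed on every line, this yields exactly $q|_{z_{i^a_b}=w_a}$ with coefficient $+1$.

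For the off-diagonal vanishing I would induct on the number of edges. If $\Gamma\neq\Gamma'$ then $\Gamma$ has an edge (otherwise both are totally disconnected and equal), so I can choose a source $v$ of $\Gamma$ with an outgoing edge $v\to t$; by \eqref{eq:resL2} the innermost operation applied is $\Res_{z_t}dz_v$, and the remaining operations form the residue of the graph $\bar\Gamma$ obtained by contracting $v$ into $t$. Since $q$ is polynomial and the factors of $p_{\Gamma'}$ involving $z_v$ come only from the edges of $\Gamma'$ at $v$, the function $p_{\Gamma'}q$ has, in the variable $z_v$, simple poles only at $z_v=z_u$ for the $\Gamma'$-neighbours $u$ of $v$. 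Thus, by \eqref{eq:cauchy2}, if $t$ is not a neighbour of $v$ in $\Gamma'$ then $\Res_{z_t}dz_v$ is regular in $z_v$ and the whole residue is zero (consistently, $\{v,t\}\in E(\Gamma)\setminus E(\Gamma')$ shows $\Gamma\neq\Gamma'$); whereas if $t$ is a $\Gamma'$-neighbour of $v$, taking the residue of the corresponding simple pole deletes that edge and substitutes $z_v=z_t$, producing $\pm\,p_{\bar{\Gamma'}}\,q|_{z_v=z_t}$, again a $p$-function times a polynomial on $n-1$ vertices with one fewer edge, to which one wants to apply the inductive hypothesis after the renormalization discussed next.

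The hard part is the combinatorial bookkeeping in this contraction step, and it is precisely here that the normalization \eqref{eq:Gcond} is indispensable rather than cosmetic. One must show that, among $\Gamma'\in\mc L(n)$, no $\Gamma'\neq\Gamma$ survives all the residue steps: although a pole of $p_{\Gamma'}$ can be \emph{transported} onto a new pair of variables by an earlier substitution, the requirement that each component start at its minimal vertex forces the underlying edge set of $\Gamma'$ to coincide with that of $\Gamma$ (first matching the partition into components, then the path within each), hence $\Gamma'=\Gamma$ with overall sign $+1$. That the convention cannot be dropped is shown already by $\Gamma=1\to2\to3$ and $\Gamma'=2\to1\to3$, for which one computes $\Res_{w}d\Gamma\,(p_{\Gamma'}q)=-q|_{z_1=z_2=z_3=w}\neq0$ although $\Gamma\neq\Gamma'$; the point is that the transported pole makes both steps succeed, and only $\Gamma'\notin\mc L(3)$ excludes this spurious match. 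The accompanying nuisance is that contracting a source may move the shortened line's starting vertex away from its minimum, so $\bar\Gamma$ and $\bar{\Gamma'}$ must be renormalized into $\mc L(n-1)$ before invoking the inductive hypothesis; this does not affect the identity, which is invariant under a simultaneous relabelling of the vertices, but it must be carried out compatibly for $\Gamma$ and $\Gamma'$, and checking that the faithfulness $\bar\Gamma=\bar{\Gamma'}\iff\Gamma=\Gamma'$ persists under this renormalization (so that the induced delta is the correct one) is the main technical point I expect to require care.
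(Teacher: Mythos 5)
Your strategy is the same as the paper's: evaluate the iterated residue one simple residue at a time, so that each step either vanishes outright (no pole at $z_v=z_t$) or contracts one edge of both graphs, and then induct on the contracted pair. Your diagonal telescoping is correct, and two features of your write-up are actually sharper than the paper's own argument: you treat the reversed orientation $t\to v\in E(\Gamma')$ with its sign $-1$ (a case the paper's dichotomy passes over, although it does occur, e.g.\ $\Gamma=\{1\}\sqcup(2\to3\to4)$, $\Gamma'=(1\to3\to2)\sqcup\{4\}$), and your example $\Gamma=1\to2\to3$, $\Gamma'=2\to1\to3$ correctly isolates why the normalization \eqref{eq:Gcond} is what makes the statement true.

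Nevertheless the proof is incomplete, and the gap is exactly the point you defer at the end: the claim that, after a compatible relabeling into $\mc L(n-1)$, the contraction is faithful --- $\bar\Gamma=\bar{\Gamma}'$ implies $\Gamma=\Gamma'$ --- is asserted (``forces the underlying edge set of $\Gamma'$ to coincide with that of $\Gamma$'') but never proved, and this claim \emph{is} the proposition; everything else is Cauchy's formula. It can be closed along the paper's lines. Contract $e=v\to t$, where $v$ is the minimum of its line in $\Gamma$, into a single vertex and relabel by $\phi(m)=m$ for $m<t$, $\phi(t)=v$, $\phi(m)=m-1$ for $m>t$; since $\phi$ is order-preserving away from $\{v,t\}$ and every other vertex of the affected lines exceeds the relevant minimum, both $\bar\Gamma$ and $\bar{\Gamma}'$ land in $\mc L(n-1)$, so the inductive hypothesis applies. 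Faithfulness then splits into the two orientation cases. If $v\to t\in E(\Gamma')$: in $\bar\Gamma$ the merged vertex heads its line, so if $\bar\Gamma=\bar{\Gamma}'$ the same holds in $\bar{\Gamma}'$, forcing the line of $\Gamma'$ containing $v\to t$ to start at $v$ as well; un-contracting, which is unambiguous once the merged vertex heads its line, then gives $\Gamma=\Gamma'$. If $t\to v\in E(\Gamma')$: then $v$ is not the head of its line in $\Gamma'$, so that line's minimum is $<v<t$, hence $t$ also has an incoming edge in $\Gamma'$; this edge survives the contraction, so the merged vertex has an incoming edge in $\bar{\Gamma}'$ but not in $\bar\Gamma$, whence $\bar\Gamma\neq\bar{\Gamma}'$ and the term $-\delta_{\bar\Gamma,\bar{\Gamma}'}$ vanishes --- which is essential, since otherwise the minus sign would contradict the stated identity. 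Without this case analysis (or an equivalent), the Kronecker delta on the right-hand side has not been established.
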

\begin{proof}
If $|E(\Gamma)|=|E(\Gamma')|=0$, the statement trivially holds.
Let $e=i\to j$ be the first edge of the first line of $\Gamma$ that is not a single vertex.
In other words, $e=1\to i^1_2$ if $k_1\geq2$, 
and, in general, $e=i^\ell_1\to i^\ell_2$ for the smallest $\ell$ such that $k_\ell\geq2$.

Observe that if neither $i\to j$ nor $j\to i$ is an edge of the graph $\Gamma'$,
then $p_{\Gamma'}$ has no pole at $z_i=z_j$, and hence
$$
\Res_{z_j}\!dz_i\, p_{\Gamma'}q=0
\,.
$$
If instead $e=i\to j$ is an edge of $\Gamma'$,
then
$$
p_{\Gamma'}=\frac1{z_{ij}}p_{\Gamma'\backslash e}
\,.
$$
Hence, by Cauchy's formula \eqref{eq:cauchy}, we have
$$
\Res_{z_j}\!dz_i\, p_{\Gamma'}q
=
(p_{\Gamma'\backslash e}\,q)\big|_{z_i=z_j}
=
p_{\bar\Gamma'}(z_1,\stackrel{i}{\check{\dots}},z_n)
\cdot
q|_{z_i=z_j}\,,
$$
where $\bar\Gamma'$ is the graph obtained from $\Gamma'$
by contracting the edge $e$ into a single vertex labeled $j$.
We then have
$$
\Res_{w_1,\dots,w_p}\!d\Gamma\, p_{\Gamma'}q
=
\Res_{w_1,\dots,w_p}\!d\bar\Gamma\, p_{\bar\Gamma'}q|_{z_i=z_j}
\,,
$$
where $\bar\Gamma$ is the graph obtained from $\Gamma$
by contracting the edge $e$ into a single vertex labeled $j$.

Note that both $\bar\Gamma$ and $\bar\Gamma'$ have the same number of edges
and lie in $\mc L(n-1)$ after the relabeling of the vertices 
$\phi\colon\{1,\stackrel{i}{\check{\dots}},n\}\to\{1,\dots,n-1\}$ given by
$$
\phi(m)=
\begin{cases}
m \,, &\text{ for }\,\, m<j\,,\\
i \,, &\text{ for }\,\, m=j\,,\\
m-1 \,, &\text{ for }\,\, m>j\,.\\
\end{cases}
$$
As a consequence, we get by induction that 
$$
\Res_{w_1,\dots,w_p}\!d\bar\Gamma\, p_{\bar\Gamma'}q|_{z_i=z_j}
=
\delta_{\bar\Gamma,\bar\Gamma'}
q(z_1,\dots,z_n)\big|_{z_{i^a_b}=w_a\,\forall a,b}
\,.
$$
If $\bar\Gamma\neq\bar\Gamma'$, then $\Gamma\neq\Gamma'$.
Conversely, if $\bar\Gamma=\bar\Gamma'$,
then $\Gamma=\Gamma'$ since they both lie in $\mc L(n)$
and $i<j$.
The claim follows.
\end{proof}
\begin{theorem}\label{lem:res5}
The set $\mc L(n)$ is a basis for the quotient space $\mb F\mc G(n)/R(n)$,
where $\mb F\mc G(n)$ is the vector space with basis the set of graphs $\mc G(n)$,
and $R(n)$ is the subspace spanned by the cycle relations (i) and (ii) from Section \ref{sec:4.1}.
\end{theorem}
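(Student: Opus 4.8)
The plan is to prove the two defining properties of a basis separately. Spanning is already Lemma~\ref{prop:basis}, so the entire content of the theorem lies in the \emph{linear independence} of the classes of $\mc L(n)$ in $\mb F\mc G(n)/R(n)$. The strategy is to exhibit, for each $\Gamma\in\mc L(n)$, a linear functional on $\mb F\mc G(n)$ that annihilates $R(n)$ and is dual to the set $\mc L(n)$; the pairing furnished by Proposition~\ref{lem:res4} is precisely such a dual system, once it is combined with the filtration estimates of Lemmas~\ref{lem:pgamma} and~\ref{lem:res2}.

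Concretely, for each $\Gamma\in\mc L(n)$ with $p$ connected components I would use the function $p_G$ of \eqref{gr1} and the $\Gamma$-residue of Section~\ref{sec:4.2} to define the linear map
\[
\phi_\Gamma\colon\mb F\mc G(n)\to\mc O^{\star}_p(w_1,\dots,w_p)\,,
\qquad
G\mapsto\Res_{w_1,\dots,w_p}\!d\Gamma\,\,p_G\,.
\]
The first observation is that $R(n)$ is homogeneous for the grading of $\mb F\mc G(n)$ by number of edges: a type-(i) generator is a single graph, hence of definite edge-degree, and a type-(ii) generator $\sum_{e\in C}\Gamma\backslash e$ consists of graphs all having $|E(\Gamma)|-1$ edges. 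Thus it suffices to prove independence inside each edge-degree $s$, where the relevant lines are exactly those of $\mc L(n,n-s)$ (which have $s$ edges).

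Next I would verify that, for $\Gamma$ with $|E(\Gamma)|=s$, the functional $\phi_\Gamma$ kills the degree-$s$ part of $R(n)$. For a type-(i) generator, a graph $G$ with $s$ edges containing a cycle satisfies $p_G\in\fil^{s-1}\mc O^{\star}_n$ by Lemma~\ref{lem:pgamma}(a), and then \eqref{eq:ciao3} of Lemma~\ref{lem:res2} gives $\Res_{w_1,\dots,w_p}\!d\Gamma\,p_G=0$, since $s-1<|E(\Gamma)|$. For a type-(ii) generator of degree $s$, arising from a graph with a cycle $C$, Lemma~\ref{lem:pgamma}(b) gives $\sum_{e\in C}p_{G\backslash e}=0$, so $\phi_\Gamma$ annihilates it by linearity. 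On the other hand, Proposition~\ref{lem:res4} with $q=1$ yields $\phi_\Gamma(\Gamma')=\Res_{w_1,\dots,w_p}\!d\Gamma\,p_{\Gamma'}=\delta_{\Gamma,\Gamma'}$ for $\Gamma,\Gamma'\in\mc L(n)$ of equal edge-degree. Therefore, if $\sum_{\Gamma'}c_{\Gamma'}\Gamma'\in R(n)$, I pass to its homogeneous component of edge-degree $s$, apply $\phi_\Gamma$ for each $\Gamma\in\mc L(n,n-s)$, and read off $c_\Gamma=0$; running over all $s$ shows every coefficient vanishes, which is the desired independence.

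The genuine difficulty has already been absorbed into Proposition~\ref{lem:res4} (the orthogonality of residues against the $p_{\Gamma'}$) and into the filtration-drop Lemma~\ref{lem:pgamma}(a); granting these, the theorem is a formal duality argument. The only point that requires real care is the bookkeeping of edge-degrees: the residue $\Res d\Gamma$ must lower the number of poles by exactly $|E(\Gamma)|$, so that a cyclic graph, whose $p_G$ sits one filtration level too low, is annihilated, while a genuine line of the matching degree is still detected with coefficient $1$. This matching is what makes $\phi_\Gamma$ simultaneously descend to the quotient and separate the basis elements.
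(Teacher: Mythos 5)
Your proposal is correct and follows essentially the same route as the paper's proof: both reduce to linear independence via Lemma~\ref{prop:basis}, exploit homogeneity of the cycle relations in edge-degree, and use the pairing $G\mapsto\Res_{w_1,\dots,w_p}\!d\Gamma\,p_G$, with Lemma~\ref{lem:pgamma}(b) killing the type-(ii) generators, Lemma~\ref{lem:pgamma}(a) together with \eqref{eq:ciao3} killing the type-(i) contributions, and Proposition~\ref{lem:res4} supplying the $\delta_{\Gamma,\Gamma'}$ orthogonality. Your repackaging of the argument as a system of dual functionals $\phi_\Gamma$ annihilating $R(n)$ is only a cosmetic difference from the paper's direct computation.
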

\begin{proof}
We already know by Lemma \ref{prop:basis}
that $\mc L(n)$ spans $\mb F\mc G(n)$ modulo cycle relations.
Hence, we only need to prove linear independence.
Let
$$
\sum_{\Gamma\in\mc L(n)}c_\Gamma \Gamma
\in
R(n)
\,,\qquad
c_\Gamma\in\mb F
\,.
$$
Since the cycle relations are homogeneous in the number of edges,
we can assume that all the graphs $\Gamma$ appearing above have the
same number of edges, $s$.
Then,
$\sum_{\Gamma\in\mc L(n)}c_\Gamma \Gamma$
is a linear combination of graphs $\Gamma_1\in\mc G(n)$
with $s$ edges and not acyclic,
and of
$\sum_{e\in C}\Gamma_2\backslash e$,
where $\Gamma_2\in\mc G(n)$ has $s+1$ edges and contains a cycle $C$.
It follows from Lemma \ref{lem:pgamma}(b) that
\begin{equation}\label{eq:ciao4}
\sum_{\Gamma\in\mc L(n)}c_\Gamma p_\Gamma
\end{equation}
is a linear combination of $p_{\Gamma_1}$,
where $\Gamma_1\in\mc G(n)$
have $s$ edges and are not acyclic.
Let $\Gamma\in\mc L(n)$ be as in \eqref{eq:Gamma} with $p=n-s$.
Applying $\Res_{w_1,\dots,w_p}\!d\Gamma$
to \eqref{eq:ciao4}, we get $c_\Gamma$ by Proposition \ref{lem:res4}.
On the other hand, by Lemma \ref{lem:pgamma}(a) and equation \eqref{eq:ciao3},
we get $c_\Gamma=0$.
\end{proof}

\subsection{$\Gamma$-Fourier transforms}\label{sec:4.4}

Let $\Gamma=L_1\sqcup\dots\sqcup L_p\in\mc L(n)$ be a disjoint
union of lines as in \eqref{eq:Gamma}.
We define the $\Gamma$-\emph{exponential} function as
\begin{equation}\label{eq:gammaexp1}
E^{\Gamma}_{\lambda_1,\dots,\lambda_n}(z_1,\dots,z_n)
=
\prod_{\ell=1}^p 
E^{L_\ell}_{\lambda_{i^\ell_1},\dots,\lambda_{i^\ell_{k_\ell}}}
(z_{i^\ell_1},\dots,z_{i^\ell_{k_\ell}})
\,\in\mc O^{T}_n[[\lambda_1,\dots,\lambda_n]]
\,,
\end{equation}
where, for a line $L=i_1\to i_2\to\dots\to i_k$, we let
\begin{equation}\label{eq:gammaexp2}
E^{L}_{\lambda_{i_1},\dots,\lambda_{i_k}}(z_{i_1},\dots,z_{i_k})
=
\exp\Big(-\sum_{a=1}^{k-1}z_{i_ai_k}\lambda_{i_a}\Big)
\,.
\end{equation}
For example, if $k=1$ and $L$ consists of the single vertex $i$,
then 
$E^{L}_{\lambda_i}(z_i)=1$.
If $k=2$ and $L$ has one edge $i\to j$,
then 
$E^{L}_{\lambda_i,\lambda_j}(z_i,z_j)=e^{-z_{ij}\lambda_i}$.
\begin{definition}\label{def:Gfourier}
For $\Gamma\in\mc L(n,p)$ and $f\in\mc O^{\star}_n$,
the $\Gamma$-\emph{Fourier transform} of $f$ is
\begin{equation}\label{eq:Gfourier}
\mc F^\Gamma_{\lambda_1,\dots,\lambda_n}
(f;w_1,\dots,w_p)
=
\Res_{w_1,\dots,w_p}\!d\Gamma\,
f(z_1,\dots,z_n)E^{\Gamma}_{\lambda_1,\dots,\lambda_n}(z_1,\dots,z_n)
\,.
\end{equation}
If we do not need to specify the variables $w_1,\dots,w_p$, we will use the simplified notation
$\mc F^\Gamma_{\lambda_1,\dots,\lambda_n}(f)$.
\end{definition}
\begin{example}\label{ex:fourier1}
If $p=n$ and $\Gamma=\bullet\cdots\bullet$ has no edges,
the corresponding Fourier transform is
$$
\mc F^{\bullet\cdots\bullet}_{\lambda_1,\dots,\lambda_n}
(f;w_1,\dots,w_n)
=
f(w_1,\dots,w_n)
\,.
$$
\end{example}
\begin{example}\label{ex:fourier2}
If $p=1$ and $\Gamma=L=1\to\cdots\to n$ is a single line,
the corresponding Fourier transform is
$$
\displaystyle{
\mc F^{1\to\cdots\to n}_{\lambda_1,\dots,\lambda_n}
(f;w)
=
\Res_{w}\!dz_{n-1}
\cdots
\Res_{z_2}\!dz_1
e^{-\sum_{i=1}^{n-1}(z_i-w)\lambda_{i}}
\, f(z_1,\dots,z_{n-1},w)
\,.}
$$
Note that, by Lemma \ref{lem:Gfourier1} below, if $f\in\mc O^{\star T}_n$,
then 
$$
\mc F^{1\to\cdots\to n}_{\lambda_1,\dots,\lambda_n}(f;w)
=
\mc F^{1\to\cdots\to n}_{\lambda_1,\dots,\lambda_n}(f;0)
\in
\mb F[\lambda_1,\dots,\lambda_n]
\,
$$
since $\mc O^{\star T}_1=\mb F$. In particular, the Fourier transform is independent of $w$.
\end{example}
\begin{lemma}\label{lem:Gfourier1}
Let\/ $\Gamma\in\mc L(n)$ be as in \eqref{eq:Gamma} and\/ $f\in\mc O^{\star}_n$.
Then\/
$\mc F^\Gamma_{\lambda_1,\dots,\lambda_n}
(f)\in\mc O^{\star}_p[\lambda_1,\dots,\lambda_n]$.
If\/ $f\in\mc O^{\star T}_n$, then\/
$\mc F^\Gamma_{\lambda_1,\dots,\lambda_n}
(f)\in\mc O^{\star T}_p[\lambda_1,\dots,\lambda_n]$.
\end{lemma}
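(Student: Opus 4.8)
The plan is to establish polynomiality in the $\la_i$ first, and then deduce translation invariance from Lemma \ref{lem:res1}.

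For the first claim, I would reduce to the case of a single line $L=i_1\to\dots\to i_k$, since by \eqref{eq:resG2} the $\Gamma$-residue is a composition of line-residues acting on disjoint sets of $z$-variables, and $E^\Gamma$ factors accordingly as in \eqref{eq:gammaexp1}. The crucial device is the change of variables to consecutive differences $u_b=z_{i_b}-z_{i_{b+1}}$, $b=1,\dots,k-1$ (keeping $z_{i_k}$). A telescoping computation gives $z_{i_a}-z_{i_k}=\sum_{b=a}^{k-1}u_b$, so the exponent of $E^L$ becomes $-\sum_{b=1}^{k-1}u_b\La_b$ with $\La_b=\la_{i_1}+\dots+\la_{i_b}$, and hence $E^L=\prod_{b=1}^{k-1}e^{-u_b\La_b}$ \emph{decouples}: the factor $e^{-u_b\La_b}$ depends on $u_b$ alone. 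In these coordinates the elementary residue $\Res_{z_{i_{b+1}}}dz_{i_b}$ is precisely the residue in $u_b$ at $u_b=0$ (with the remaining $u$'s and $z_{i_k}$ held fixed), and it therefore sees only the single exponential factor $e^{-u_b\La_b}$, the factors $e^{-u_c\La_c}$ with $c>b$ being constant in $u_b$.

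I would then expand $e^{-u_b\La_b}=\sum_{m\geq0}\frac{(-\La_b)^m}{m!}u_b^m$ and invoke Cauchy's formula \eqref{eq:cauchy}: at each step the current function is rational with a pole of some finite order $N_b$ in $u_b$ at $u_b=0$, so only the terms $m\leq N_b-1$ contribute to the residue, which thus produces a polynomial in $\La_b$ of degree less than $N_b$. Since residues of rational functions are rational, the pole orders remain finite from one step to the next. After all $k-1$ residues and the substitution $z_{i_k}=w$, we obtain a polynomial in $\La_1,\dots,\La_{k-1}$, equivalently in $\la_{i_1},\dots,\la_{i_{k-1}}$, with coefficients in $\mc O^{\star}_1(w)$. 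Carrying this out for every line and composing, the result lies in $\mc O^{\star}_p[\la_1,\dots,\la_n]$, where the poles on the diagonals $w_a=w_b$ originate from poles of $f$ coupling vertices of different lines.

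For the second claim, I would observe that $E^\Gamma$ depends on the $z_i$ only through the differences $z_{ij}$, so $E^\Gamma\in\mc O^{T}_n[[\la_1,\dots,\la_n]]$ as recorded in \eqref{eq:gammaexp1}; if $f\in\mc O^{\star T}_n$, then $fE^\Gamma$ is translation invariant, i.e.\ every coefficient of its $\la$-expansion lies in $\mc O^{\star T}_n$. Because the $\Gamma$-residue is $\mb F$-linear and independent of the $\la_i$, it may be applied coefficientwise, and Lemma \ref{lem:res1} sends each such coefficient into $\mc O^{\star T}_p$. Combined with the polynomiality established above, this yields $\mc F^\Gamma_{\la_1,\dots,\la_n}(f)\in\mc O^{\star T}_p[\la_1,\dots,\la_n]$. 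I expect the main obstacle to be the first claim, and specifically making the decoupling rigorous: verifying that $\Res_{z_{i_{b+1}}}dz_{i_b}$ is exactly the residue in $u_b$ at $u_b=0$ and tracking that the finite pole order controlling the $\la$-degree persists after each residue. Once the exponential factors are decoupled, the degree bound via Cauchy's formula is routine.
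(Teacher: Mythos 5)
Your proposal is correct and takes essentially the same approach as the paper's proof: reduce to a single line, expand the exponential and truncate the series using the finite order of the pole of $f$ via Cauchy's formula \eqref{eq:cauchy}, and deduce the second claim from Lemma \ref{lem:res1} together with the translation invariance of $E^\Gamma$. The differences are only organizational---you decouple all the exponential factors at once by the telescoping substitution $u_b=z_{i_b}-z_{i_{b+1}}$, so that $E^L=\prod_{b}e^{-u_b\Lambda_b}$, whereas the paper peels off one vertex at a time by rewriting $z_{i_1i_k}\lambda_{i_1}=z_{i_1i_2}\lambda_{i_1}+z_{i_2i_k}\lambda_{i_1}$ and absorbing $\lambda_{i_1}$ into $\widetilde{\lambda}_{i_2}$ before inducting on the length of the line---apart from the harmless slip that your single-line coefficients lie in $\mc O^{\star}_{n-k+1}$ (the spectator variables of the other lines are still present), not in $\mc O^{\star}_1(w)$, which does not affect your final conclusion.
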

\begin{proof}
For the first claim, note that the $\Gamma$-residue and the $\Gamma$-exponential
are defined as products over the lines $L_1,\dots,L_p$
in $\Gamma$.
Hence, it is enough to consider the case
of a single line $L=i_1\to i_2\to\dots \to i_k$.
We have
$$
\sum_{a=1}^{k-1}z_{i_ai_k}\lambda_{i_a}
=
z_{i_1i_2}\lambda_{i_1}
+
\sum_{a=2}^{k-1}z_{i_ai_k}\widetilde{\lambda}_{i_a}
\,,
$$
where $\widetilde{\lambda}_{i_a}=\lambda_{i_a}+\delta_{a,2}\lambda_{i_1}$.
Expanding the exponential $e^{-z_{i_1i_2}\lambda_{i_1}}$,
since $f$ has a pole at $z_{i_1i_2}$ of finite order,
we obtain that
$$
\Res_{z_{i_2}}\!dz_{i_1}\,
f(z_1,\dots,z_n) 
E^{L}_{\lambda_{i_1},\dots,\lambda_{i_k}}(z_{i_1},\dots,z_{i_k})
$$
is a polynomial in $\lambda_{i_1}$.
Proceeding by induction, we get the first claim.
The second claim follows from Lemma \ref{lem:res1} and the fact that the $\Gamma$-exponential
\eqref{eq:gammaexp1} is translation invariant.
\end{proof}
\begin{lemma}\label{lem:Gfourier2}
Let\/ $\Gamma\in\mc L(n)$ with\/ $|E(\Gamma)|=s$.
Then\/
$
\mc F^\Gamma_{\lambda_1,\dots,\lambda_n}
(\fil^r\mc O^{\star}_n)=0
$
for all\/ $r<s$.
\end{lemma}
\begin{proof}
It follows immediately from \eqref{eq:ciao3}.
\end{proof}
\begin{lemma}\label{lem:Gfourier3}
Let\/ $\Gamma,\Gamma'\in\mc L(n)$ with\/ $|E(\Gamma)|=|E(\Gamma')|$.
Then\/
$
\mc F^\Gamma_{\lambda_1,\dots,\lambda_n}
(p_{\Gamma'})=\delta_{\Gamma,\Gamma'}
$.
\end{lemma}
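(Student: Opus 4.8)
The plan is to reduce the computation directly to Proposition \ref{lem:res4} by expanding the $\Gamma$-exponential as a formal power series in the variables $\la_1,\dots,\la_n$. Writing $\Gamma=L_1\sqcup\dots\sqcup L_p$ as in \eqref{eq:Gamma}, recall from \eqref{eq:gammaexp1}--\eqref{eq:gammaexp2} that $E^\Gamma_{\la_1,\dots,\la_n}\in\mc O^T_n[[\la_1,\dots,\la_n]]$, so that its expansion in the $\la_i$ has coefficients that are honest polynomials lying in $\mc O_n$. Since the $\Gamma$-residue $\Res_{w_1,\dots,w_p}\!d\Gamma$ acts only on the $z$-variables, it is $\mb F[[\la_1,\dots,\la_n]]$-linear, and hence may be applied to $p_{\Gamma'}E^\Gamma$ coefficient by coefficient in $\la$.

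Next I would apply Proposition \ref{lem:res4} to each such coefficient. Since $|E(\Gamma)|=|E(\Gamma')|$ by hypothesis, and each $\la$-coefficient of $E^\Gamma$ is a polynomial $q\in\mc O_n$, the proposition gives
\begin{equation*}
\Res_{w_1,\dots,w_p}\!d\Gamma\, p_{\Gamma'}\, q
=\delta_{\Gamma,\Gamma'}\, q\big|_{z_{i^a_b}=w_a\,\forall a,b}\,.
\end{equation*}
Reassembling the $\la$-expansion, this yields
\begin{equation*}
\mc F^\Gamma_{\la_1,\dots,\la_n}(p_{\Gamma'})
=\delta_{\Gamma,\Gamma'}\, E^\Gamma_{\la_1,\dots,\la_n}\big|_{z_{i^a_b}=w_a\,\forall a,b}\,.
\end{equation*}

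It then remains to evaluate the $\Gamma$-exponential under the substitution $z_{i^a_b}=w_a$. By \eqref{eq:gammaexp2}, each line factor $E^{L_\ell}$ depends on the $z$-variables only through the differences $z_{i^\ell_a i^\ell_{k_\ell}}=z_{i^\ell_a}-z_{i^\ell_{k_\ell}}$. Under the substitution, every vertex of the line $L_\ell$ is sent to the same value $w_\ell$, so each such difference vanishes and $E^{L_\ell}\big|_{z_{i^a_b}=w_a}=\exp(0)=1$. Taking the product over $\ell=1,\dots,p$ gives $E^\Gamma\big|_{z_{i^a_b}=w_a}=1$, whence $\mc F^\Gamma_{\la_1,\dots,\la_n}(p_{\Gamma'})=\delta_{\Gamma,\Gamma'}$, as claimed.

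The only point requiring a modicum of care is the passage through a formal power series in the $\la_i$: one must check that applying the $\Gamma$-residue termwise is legitimate. This is immediate, since the residue is a purely $z$-operation defined via Cauchy's formula \eqref{eq:cauchy}, hence commutes with extracting $\la$-coefficients, and since Lemma \ref{lem:Gfourier1} guarantees that the output is in fact a polynomial in the $\la_i$. With this in hand, the result is a one-line consequence of Proposition \ref{lem:res4} together with the vanishing of each line's exponent on the corresponding diagonal; I do not anticipate any genuine obstacle.
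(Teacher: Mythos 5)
Your proof is correct and follows exactly the route the paper intends: the paper's own proof simply declares the lemma ``an obvious consequence of Proposition \ref{lem:res4},'' and your argument—expanding $E^\Gamma$ in the $\lambda_i$, applying Proposition \ref{lem:res4} to each polynomial coefficient, and noting that $E^\Gamma$ restricts to $1$ on the diagonals $z_{i^a_b}=w_a$—is precisely the detail being suppressed there. No gaps; your termwise-residue justification is legitimate since the Fourier transform is defined coefficientwise in the $\lambda$-expansion.
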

\begin{proof}
It is an obvious consequence of Proposition \ref{lem:res4}.
\end{proof}
\begin{lemma}\label{lem:Gfourier4}
For $\Gamma\in\mc L(n,p)$ as in \eqref{eq:Gamma}
and $f\in\mc O^{\star}_n$, we have
\begin{equation}\label{eq:bye1}
\mc F^\Gamma_{\lambda_1,\dots,\lambda_n}
(\partial_{z_i} f)
=
\begin{cases}
\displaystyle{
\Big(\partial_{w_\ell}-\sum_{a=1}^{k_\ell-1}\lambda_{i^\ell_a}\Big)
\mc F^\Gamma_{\lambda_1,\dots,\lambda_n}(f)
}
&\text{ if }\,\, i=i^\ell_{k_\ell}\,,\,\,1\leq\ell\leq p\,,\\
\lambda_i\,
\mc F^\Gamma_{\lambda_1,\dots,\lambda_n}(f)
&\text{ if }\,\, i\not\in\{i^1_{k_1},\dots,i^p_{k_p}\}\,.
\end{cases}
\end{equation}
\end{lemma}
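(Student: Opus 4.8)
The plan is to reduce the computation to a single application of the elementary residue relation \eqref{eq:cauchy3} together with the exponential factor carried by the $\Gamma$-Fourier transform. First I would observe that both the $\Gamma$-residue \eqref{eq:resG2} and the $\Gamma$-exponential \eqref{eq:gammaexp1} factor as products over the connected lines $L_1,\dots,L_p$, and that $\partial_{z_i}$ acts only on the factor corresponding to the line $L_\ell$ containing the vertex $i$. Hence it suffices to prove the statement for a single line $L=i_1\to\cdots\to i_k$, where I must compute $\mc F^L_{\lambda_{i_1},\dots,\lambda_{i_k}}(\partial_{z_i}f)$ for each vertex $i=i_b$ on the line. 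The two cases of \eqref{eq:bye1} then correspond to whether $i$ is the terminal vertex $i_k$ of its line or not.

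The key computation is to move $\partial_{z_i}$ past the iterated residue using Leibniz's rule applied to $f\cdot E^L$. The crucial point is that $E^L_{\lambda_{i_1},\dots,\lambda_{i_k}}=\exp\bigl(-\sum_{a=1}^{k-1}z_{i_ai_k}\lambda_{i_a}\bigr)$ depends on $z_i$ explicitly. I would write $\partial_{z_i}(fE^L)=(\partial_{z_i}f)E^L+f(\partial_{z_i}E^L)$, so that
\begin{equation*}
\Res_{w}dL\,(\partial_{z_i}f)E^L
=\Res_{w}dL\,\partial_{z_i}(fE^L)
-\Res_{w}dL\,f(\partial_{z_i}E^L)\,.
\end{equation*}
The second term is handled by computing $\partial_{z_i}E^L$: differentiating the exponent, the variable $z_{i_b}$ (for $i=i_b$, $b<k$) appears only in the summand $-z_{i_bi_k}\lambda_{i_b}$, giving $\partial_{z_{i_b}}E^L=-\lambda_{i_b}E^L$, whereas differentiating in $z_{i_k}$ produces $\partial_{z_{i_k}}E^L=\bigl(\sum_{a=1}^{k-1}\lambda_{i_a}\bigr)E^L$. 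For the first term I would use the iterated-residue version of \eqref{eq:cauchy3}: a residue $\Res_{z_{i_{b+1}}}dz_{i_b}$ kills an inner $\partial_{z_{i_b}}$-derivative and commutes with $\partial_{z_k}$ for the remaining variables, exactly as in Lemma \ref{lem:res3}. Tracking this through the composition \eqref{eq:resL2}, the only surviving contribution of the $\partial_{z_i}$ inside $\Res_{w}dL\,\partial_{z_i}(fE^L)$ comes from $i=i_k$ (the terminal vertex carrying the free residue variable $w$), where the substitution $z_{i_k}=w$ turns $\partial_{z_{i_k}}$ into $\partial_w$; for all other $i$ this term vanishes by Lemma \ref{lem:res3}.

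Assembling the two contributions gives the two cases of \eqref{eq:bye1}: for a non-terminal vertex $i=i_b$ the first term vanishes and the second contributes $+\lambda_{i_b}\mc F^L(f)$, yielding the factor $\lambda_i$; for the terminal vertex $i=i_k$ the first term gives $\partial_w\mc F^L(f)$ and the second gives $-\bigl(\sum_{a=1}^{k-1}\lambda_{i_a}\bigr)\mc F^L(f)$, yielding the operator $\partial_{w_\ell}-\sum_{a=1}^{k_\ell-1}\lambda_{i^\ell_a}$ once I restore the line index $\ell$ and the labels $i^\ell_a$. The main obstacle is purely bookkeeping: one must be careful that the inner residues commute correctly with the surviving derivative and that the exponential's $z_i$-dependence is correctly distributed across the terminal versus interior vertices, but no analytic subtlety beyond Cauchy's formula and Lemma \ref{lem:res3} is involved.
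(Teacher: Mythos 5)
Your proposal is correct and takes essentially the same approach as the paper, whose proof simply states that the lemma follows from Lemma~\ref{lem:res3} and the definition \eqref{eq:gammaexp1}--\eqref{eq:gammaexp2} of the $\Gamma$-exponential. Your Leibniz-rule computation --- splitting $(\partial_{z_i}f)E^\Gamma = \partial_{z_i}(fE^\Gamma) - f\,\partial_{z_i}E^\Gamma$, evaluating $\partial_{z_i}E^\Gamma$ at terminal versus interior vertices, and applying Lemma~\ref{lem:res3} to the total-derivative term --- is precisely the argument the paper leaves implicit, carried out with the correct signs.
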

\begin{proof}
It follows from Lemma \ref{lem:res3}
and the definition \eqref{eq:gammaexp1}--\eqref{eq:gammaexp2}
of $\Gamma$-exponential functions.
\end{proof}
\begin{lemma}\label{lem:Gfourier5}
For $\Gamma\in\mc L(n,p)$ as in \eqref{eq:Gamma}
and $f\in\mc O^{\star}_n$, we have
\begin{equation}\label{eq:bye2}
\mc F^\Gamma_{\lambda_1,\dots,\lambda_n}
(z_i f)
=
(w_\ell-\partial_{\lambda_i})
\mc F^\Gamma_{\lambda_1,\dots,\lambda_n}(f)
\,,
\end{equation}
for
$i\in\{i^\ell_1,\dots,i^\ell_{k_\ell}\}$, $1\leq \ell\leq p$.
Note that $\partial_{\lambda_i}\mc F^\Gamma_{\lambda_1,\dots,\lambda_n}(f)=0$
for $i=i^\ell_{k_\ell}$.
\end{lemma}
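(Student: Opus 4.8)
The plan is to reduce everything to two facts already available: that $\partial_{\lambda_i}$ commutes with the $\Gamma$-residue map (the residue acts only on the $z$-variables, while $\lambda_i$ enters $\mc F^\Gamma$ only through the $\Gamma$-exponential $E^\Gamma$, since $f\in\mc O^\star_n$ carries no $\lambda$-dependence), and the multiplicative property \eqref{eq:resGmany}, which says that inserting a factor $z_{i^\ell_{k_\ell}}$ corresponding to the terminal vertex of the $\ell$-th line before taking the residue equals multiplication by $w_\ell$ afterward.

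First I would dispose of the terminal vertices. If $i=i^\ell_{k_\ell}$, then $E^\Gamma$ does not involve $\lambda_i$, since the sum in \eqref{eq:gammaexp2} stops at $a=k_\ell-1$; hence $\partial_{\lambda_i}\mc F^\Gamma_{\lambda_1,\dots,\lambda_n}(f)=0$, consistent with the final remark of the statement. The claimed identity then reads $\mc F^\Gamma_{\lambda_1,\dots,\lambda_n}(z_i f)=w_\ell\,\mc F^\Gamma_{\lambda_1,\dots,\lambda_n}(f)$, which is precisely \eqref{eq:resGmany} applied to the function $f\,E^\Gamma$ in place of $f$ (multiplication by $z_{i^\ell_{k_\ell}}$ commutes with $E^\Gamma$, as the latter has no dependence on this variable).

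Next I would treat a non-terminal vertex $i=i^\ell_a$ with $a<k_\ell$. Here the key computation, immediate from \eqref{eq:gammaexp1}--\eqref{eq:gammaexp2}, is
$$
\partial_{\lambda_i} E^\Gamma_{\lambda_1,\dots,\lambda_n}
= -(z_{i^\ell_a}-z_{i^\ell_{k_\ell}})\,E^\Gamma_{\lambda_1,\dots,\lambda_n}
= -(z_i-z_{i^\ell_{k_\ell}})\,E^\Gamma_{\lambda_1,\dots,\lambda_n} \,.
$$
Differentiating $\mc F^\Gamma(f)=\Res_{w_1,\dots,w_p}\!d\Gamma\,(f\,E^\Gamma)$ under the residue then gives
$$
\partial_{\lambda_i}\mc F^\Gamma_{\lambda_1,\dots,\lambda_n}(f)
= -\mc F^\Gamma_{\lambda_1,\dots,\lambda_n}(z_i f)
+ \mc F^\Gamma_{\lambda_1,\dots,\lambda_n}(z_{i^\ell_{k_\ell}} f) \,.
$$
By the terminal-vertex identity just established, the last term equals $w_\ell\,\mc F^\Gamma(f)$, and rearranging yields $\mc F^\Gamma_{\lambda_1,\dots,\lambda_n}(z_i f)=(w_\ell-\partial_{\lambda_i})\mc F^\Gamma_{\lambda_1,\dots,\lambda_n}(f)$, as claimed.

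I expect no serious obstacle; the argument is a short manipulation. The only points requiring care are the sign in $\partial_{\lambda_i}E^\Gamma$ and the bookkeeping distinguishing the terminal vertex $i^\ell_{k_\ell}$, on which $E^\Gamma$ has no dependence, from the others, so that \eqref{eq:resGmany} is invoked on $f\,E^\Gamma$ rather than on $f$ alone.
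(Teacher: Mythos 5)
Your proof is correct and takes essentially the same route as the paper, whose (one-line) proof cites exactly the two ingredients you use: the definition \eqref{eq:gammaexp1}--\eqref{eq:gammaexp2} of the $\Gamma$-exponential and equation \eqref{eq:resGmany}, combined via differentiation under the residue. One parenthetical aside is inaccurate --- $E^\Gamma$ \emph{does} depend on $z_{i^\ell_{k_\ell}}$ (through the differences $z_{i^\ell_a}-z_{i^\ell_{k_\ell}}$), though not on $\lambda_{i^\ell_{k_\ell}}$ --- but this remark is not needed, since \eqref{eq:resGmany} applies directly to the full function $f\,E^\Gamma$ coefficient-by-coefficient in the $\lambda$'s.
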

\begin{proof}
It follows from the definition \eqref{eq:gammaexp1}--\eqref{eq:gammaexp2}
of the $\Gamma$-exponential and equation \eqref{eq:resGmany}.
\end{proof}

\subsection{Convolution product}\label{sec:4.4b}

In this section, we define a bilinear \emph{convolution product}
\begin{equation}\label{eq:notaction2}
\mc O^{\star}_p \times \mb F[\Lambda_1,\dots,\Lambda_p] 
\to \mb F[\Lambda_1,\dots,\Lambda_p] 
\,,\qquad
(F,Q) \mapsto F*Q(\Lambda_1,\dots,\Lambda_p) 
\,,
\end{equation}
as follows.
First, we introduce the map
\begin{equation}\label{eq:iota}
\iota_{w_1,\dots,w_p}
\colon
\mc O^{\star}_p(w_1,\dots,w_p)
\to
\mb F((w_1))\cdots((w_{p-1}))[[w_p]]
\,,
\end{equation}
defined as the geometric series expansion in the domain
$|w_1|>|w_2|>\dots>|w_p|$.
Then, for $F\in\mc O^{\star}_p$ and 
$Q\in\mb F[\Lambda_1,\dots,\Lambda_p]$,
we let
\begin{equation}\label{eq:notaction3}
F*Q(\Lambda_1,\dots,\Lambda_p) =
\iota_{w_1,\dots,w_p} F(w_1,\dots,w_p)
\big|_{w_\ell=-\partial_{\Lambda_\ell}}
Q(\Lambda_1,\dots,\Lambda_p) 
\,.
\end{equation}

Let us explain why the right-hand side of \eqref{eq:notaction3}
is a well-defined polynomial.
We expand the formal Laurent series 
$\widetilde F=\iota_{w_1,\dots,w_p}F$
and the polynomial $Q$ as
$$
\widetilde F
=
\sum_{a\in\mb Z^p} c_{a_1,\dots,a_p}
w_1^{a_1}\cdots w_p^{a_p}
\;\text{ and }\;
Q
=
\sum_{b\in\mb Z_+^p} d_{b_1,\dots,b_p}
\Lambda_1^{(b_1)}\cdots\Lambda_p^{(b_p)}
\,.
$$
Here and below we use the divided power notation 
$\Lambda_\ell^{(b_\ell)}:=\Lambda_\ell^{b_\ell} / b_\ell!$ for $b_\ell\geq0$ and $\Lambda_\ell^{(b_\ell)}=0$ for $b_\ell<0$.
Then we have
\begin{equation}\label{eq:notaction}
\begin{split}
\widetilde F(w_1,&\dots,w_p)
\big|_{w_\ell=-\partial_{\Lambda_\ell}}
Q(\Lambda_1,\dots,\Lambda_p) \\
&=
\sum_{a,b} c_{a_1,\dots,a_p}d_{b_1,\dots,b_p}
(-1)^{a_1+\cdots+a_p}
\Lambda_1^{(b_1-a_1)}\cdots\Lambda_p^{(b_p-a_p)}
\,.
\end{split}
\end{equation}
We claim that the sum in the right-hand side of \eqref{eq:notaction} is finite.
Indeed, 
the sum over $b\in\mb Z_+^p$ is finite.
For every fixed $b$,
the sum over $a_p$ is finite;
for every $a_p$ the sum over $a_{p-1}$ is finite, and so on.
Hence, \eqref{eq:notaction} is a well-defined polynomial.

Note that \eqref{eq:iota} is an algebra homomorphism.
However, the convolution product \eqref{eq:notaction2}
does not define an action of the algebra $\mc O^{\star}_p$
on the space of polynomials. 
For example, we have
$$
\frac1{w_1-w_2}*\big((w_1-w_2)*1\big)=0
\,\,\text{ while }\,\,
\frac{w_1-w_2}{w_1-w_2}*1=1
\,.
$$
Nevertheless, we have the following lemma.
\begin{lemma}\label{lem:notaction}
For every\/ $F\in\mc O^{\star}_p(w_1,\dots,w_p)$ and\/ $Q\in\mb F[\Lambda_1,\dots,\Lambda_p]$, we have $(\ell=1,\dots,p):$
\begin{align}
& (w_\ell F)*Q
=
-\partial_{\Lambda_\ell}(F*Q)
\,,\label{eq:notaction4}\\
& \Lambda_\ell (F*Q) - F*(\Lambda_\ell Q) 
= 
(\partial_{w_\ell}F)*Q
\,.\label{eq:notaction5}
\end{align}
\end{lemma}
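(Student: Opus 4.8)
The plan is to prove both identities \eqref{eq:notaction4} and \eqref{eq:notaction5} directly from the definition \eqref{eq:notaction3} of the convolution product, exploiting the fact that $\iota_{w_1,\dots,w_p}$ is an algebra homomorphism. The underlying principle is simple: multiplication by $w_\ell$ on the function side corresponds, under the substitution $w_\ell \mapsto -\partial_{\Lambda_\ell}$, to applying $-\partial_{\Lambda_\ell}$, while multiplication by $\Lambda_\ell$ on the polynomial side and differentiation $\partial_{w_\ell}$ on the function side are related by a commutator computation. The main subtlety to handle carefully is that the convolution is \emph{not} an action, so one cannot freely compose substitutions; the identities must be read off from the explicit finite sum \eqref{eq:notaction}.

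For \eqref{eq:notaction4}, I would argue as follows. Since $\iota$ is an algebra homomorphism, $\iota(w_\ell F) = w_\ell\,\iota(F) = w_\ell \widetilde{F}$, so the series expansion of $w_\ell F$ is obtained from that of $F$ by shifting the exponent $a_\ell \mapsto a_\ell + 1$. Substituting into \eqref{eq:notaction}, the coefficient $c_{a_1,\dots,a_p}$ now sits in front of $\Lambda_\ell^{(b_\ell - a_\ell - 1)}$ (with the other factors unchanged) times an extra sign $(-1)$ coming from $(-1)^{a_1+\cdots+(a_\ell+1)+\cdots+a_p}$. On the other hand, applying $-\partial_{\Lambda_\ell}$ to $F*Q$ in \eqref{eq:notaction} sends each divided power $\Lambda_\ell^{(b_\ell - a_\ell)}$ to $\Lambda_\ell^{(b_\ell - a_\ell - 1)}$, again producing the factor $-1$. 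Matching these two computations term by term yields \eqref{eq:notaction4}. This step is routine bookkeeping with divided powers, using $\partial_\Lambda \Lambda^{(b)} = \Lambda^{(b-1)}$.

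For \eqref{eq:notaction5}, the idea is to compute $\partial_{w_\ell}F$ on the function side. Writing $\widetilde{F} = \sum_a c_a w_1^{a_1}\cdots w_p^{a_p}$, we have $\iota(\partial_{w_\ell}F) = \partial_{w_\ell}\widetilde{F} = \sum_a a_\ell\, c_a\, w^{a - e_\ell}$, where $e_\ell$ is the $\ell$-th standard basis vector; thus $(\partial_{w_\ell}F)*Q$ picks up the factor $a_\ell$ and shifts $b_\ell \mapsto b_\ell - a_\ell + 1$ in the divided power. Meanwhile, $\Lambda_\ell(F*Q) - F*(\Lambda_\ell Q)$ measures exactly the failure of $\Lambda_\ell$ to commute past the substitution operator $\widetilde{F}|_{w_\ell = -\partial_{\Lambda_\ell}}$. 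Since $[\,\partial_{\Lambda_\ell},\, \Lambda_\ell\,] = 1$ as operators on $\mb F[\Lambda_1,\dots,\Lambda_p]$, and more generally $[\,(-\partial_{\Lambda_\ell})^{a_\ell},\, \Lambda_\ell\,] = -a_\ell(-\partial_{\Lambda_\ell})^{a_\ell - 1}$, commuting $\Lambda_\ell$ through the monomial substitution produces precisely the correction term $a_\ell(-\partial_{\Lambda_\ell})^{a_\ell-1}$ on the $\ell$-th factor, which matches the effect of $\partial_{w_\ell}$. I expect the cleanest way to present this is to fix a single monomial $\widetilde{F} = w_1^{a_1}\cdots w_p^{a_p}$ (so $c_a = 1$), verify the commutator identity there using $[\,\partial_{\Lambda_\ell}^{a_\ell}, \Lambda_\ell\,] = a_\ell \partial_{\Lambda_\ell}^{a_\ell - 1}$, and then extend by linearity and convergence of the relevant sums.

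The main obstacle is conceptual rather than computational: because $*$ is not an action of $\mc O^{\star}_p$, one must resist the temptation to treat $w_\ell = -\partial_{\Lambda_\ell}$ as an honest commuting substitution and instead work with the explicit finite expansion \eqref{eq:notaction} throughout, verifying that every manipulation (exponent shift, commutator) is performed on individual Laurent monomials where everything is unambiguous and the sums remain finite. Both identities are in fact formal operator statements, and once reduced to the single-monomial case they follow from the standard Heisenberg commutation relation on $\mb F[\Lambda_1,\dots,\Lambda_p]$.
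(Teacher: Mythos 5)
Your proposal is correct and takes essentially the same route as the paper: the paper's proof likewise reduces by linearity to monomials $F=w_1^{a_1}\cdots w_p^{a_p}$, $Q=\Lambda_1^{(b_1)}\cdots\Lambda_p^{(b_p)}$ and verifies both identities by explicit divided-power computations via \eqref{eq:notaction}. One caution: the operator identity $[\partial_{\Lambda_\ell}^{a_\ell},\Lambda_\ell]=a_\ell\partial_{\Lambda_\ell}^{a_\ell-1}$ is literally meaningless when $a_\ell<0$, so in that case the ``commutator'' step must be phrased directly as the divided-power computation $\Lambda_\ell\,\Lambda_\ell^{(b_\ell-a_\ell)}=(b_\ell-a_\ell+1)\Lambda_\ell^{(b_\ell-a_\ell+1)}$ versus $(b_\ell+1)\Lambda_\ell^{(b_\ell+1-a_\ell)}$ --- which is precisely the unambiguous monomial verification you say you would fall back on, and is what the paper writes out.
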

\begin{proof}
For simplicity of notation, let $\ell=1$. By linearity, we can assume that 
$F=w_1^{a_1}\cdots w_p^{a_p}$ and $Q=\Lambda_1^{(b_1)}\cdots\Lambda_p^{(b_p)}$,
where $a_i\in\mb Z$, $b_i\in\mb Z_+$. Then using \eqref{eq:notaction}, we find:
\begin{align*}
(w_1 F)*Q &=
(-1)^{1+a_1+\cdots+a_p} \,
\Lambda_1^{(b_1-a_1-1)} \Lambda_2^{(b_2-a_2)}\cdots\Lambda_p^{(b_p-a_p)}
\,, \\
\partial_{\Lambda_1}(F*Q) &=
(-1)^{a_1+\cdots+a_p} \,
\Lambda_1^{(b_1-a_1-1)} \Lambda_2^{(b_2-a_2)}\cdots\Lambda_p^{(b_p-a_p)}
\,, \\
\Lambda_1 (F*Q) &= 
(-1)^{a_1+\cdots+a_p} \, (b_1+1-a_1)
\Lambda_1^{(b_1+1-a_1)} \Lambda_2^{(b_2-a_2)}\cdots\Lambda_p^{(b_p-a_p)}
\,, \\
F*(\Lambda_1 Q) &= 
(-1)^{a_1+\cdots+a_p} \, (b_1+1)
\Lambda_1^{(b_1+1-a_1)} \Lambda_2^{(b_2-a_2)}\cdots\Lambda_p^{(b_p-a_p)}
\,, \\
(\partial_{w_\ell}F)*Q &=
(-1)^{1+a_1+\cdots+a_p} \, a_1
\Lambda_1^{(b_1+1-a_1)} \Lambda_2^{(b_2-a_2)}\cdots\Lambda_p^{(b_p-a_p)}
\,.
\end{align*}
The claim follows.
\end{proof}
Note that neither side of \eqref{eq:notaction4}
is necessarily equal to $-F*(\partial_{\Lambda_\ell}Q)$.
Indeed, in the same setting as in the proof of Lemma \ref{lem:notaction},
we have
$$
F*(\partial_{\Lambda_1}Q)
=
(-1)^{a_1+\cdots+a_p} \,
\Lambda_1^{(b_1-a_1-1)} \Lambda_2^{(b_2-a_2)}\cdots\Lambda_p^{(b_p-a_p)}
\,,
$$
unless $a_1<b_1=0$, in which case $F*(\partial_{\Lambda_1}Q)=0$,
while the right-hand side above is not $0$.

\section{Relation between chiral and classical operads}\label{sec:6}

\subsection{Preliminary notation}\label{sec:5.1}

Let $V$ be a superspace with an even endomorphism $\partial$,
which is $\mb Z_+$-graded by $\mb F[\partial]$-submodules:
\begin{equation}\label{eq:Vgrading}
V=\bigoplus_{s\in\mb Z_+}V_s\,.
\end{equation}
We have the induced increasing filtration by $\mb F[\partial]$-submodules
\begin{equation}\label{eq:Vfil}
\fil^t V=\bigoplus_{s=0}^tV_s\,,
\end{equation}
and the associated graded $\gr V$ is canonically isomorphic to $V$.

Let $Y\in \gr^r P^{\cl}(n)$ and let $\Gamma\in\mc L(n,p)$
be as in \eqref{eq:Gamma}.
Recall that, for $v\in V^{\otimes n}$, 
by the sesquilinearity axiom \eqref{eq:sesq1},
$Y^\Gamma_{\lambda_1,\dots,\lambda_n}(v)$ is a polynomial in the variables
\begin{equation}\label{eq:Lambdas}
\Lambda_\ell
=
\lambda_{L_\ell}
=
\lambda_{i^\ell_1}+\dots+\lambda_{i^\ell_{k_\ell}}
\,,\qquad
\ell=1,\dots,p
\,.
\end{equation}
By an abuse of notation, we shall then alternatively write
$$
Y^\Gamma_{\Lambda_1,\dots,\Lambda_p}(v)
=
Y^\Gamma_{\lambda_1,\dots,\lambda_n}(v)
\,,\qquad
\Gamma\in\mc L(n,p)
\,.
$$

Recall also that, for $i=1,\dots,n$, $\partial_i\colon V^{\otimes n}\to V^{\otimes n}$
denotes the action of $\partial$ on the $i$-th factor.
For a polynomial 
$$
P(x_1,\dots,x_n)
=
\sum c_{j_1,\dots,j_n}
x_1^{j_1}\cdots x_n^{j_n}
\,,
$$
we will use the notation
\begin{equation}\label{eq:notation}
P(x_1,\dots,x_n)\big(\big|_{x_i=\partial_i}\,v\big)
=
\sum c_{j_1,\dots,j_n}
\partial_1^{j_1}\cdots \partial_n^{j_n}v
\,.
\end{equation}

\subsection{The inverse map}\label{sec:5.2}

Given $f\in\mc O^{\star T}_n$
and $v\in V^{\otimes n}$,
we define
\begin{equation}\label{eq:formula}
X_{\lambda_1,\dots,\lambda_n}(v\otimes f)
=
\sum_{p=1}^n
\sum_{\Gamma\in\mc L(n,p)}
\!\!\!
\mc F^\Gamma_{\lambda_1+x_1,\dots,\lambda_n+x_n}(f) *
Y^\Gamma_{\Lambda_1,\dots,\Lambda_p}\big(\big|_{x_i=\partial_i}v\big)
\,.
\end{equation}
Let us explain the meaning of this formula.
By Lemma \ref{lem:Gfourier1},
the $\Gamma$-Fourier transform is a finite sum
$$
\mc F^\Gamma_{\lambda_1+x_1,\dots,\lambda_n+x_n}(f)
=
\sum_{a\in\mb Z_+^n}
F_a(w_1,\dots,w_p)\,(\lambda_1+x_1)^{(a_1)}\cdots(\lambda_n+x_n)^{(a_n)}
$$
with coefficients $F_a\in\mc O^{\star T}_p$.
According to the notation \eqref{eq:notation},
we apply the $x_i$ as $\partial_i$ on the vector $v$
in the argument of $Y^\Gamma_{\Lambda_1,\dots,\Lambda_p}$.
Then we take the convolution product \eqref{eq:notaction2} 
of each $F_a$ 
with $Y^\Gamma_{\Lambda_1,\dots,\Lambda_p}$,
which is a polynomial in $\Lambda_1,\dots,\Lambda_p$.
As a result, each summand in the right-hand side of \eqref{eq:formula} is
\begin{equation}\label{eq:formula2}
\sum_{a,b\in\mb Z_+^n}\lambda_1^{(a_1)}\cdots\lambda_n^{(a_n)}
F_{a+b}*
Y^\Gamma_{\Lambda_1,\dots,\Lambda_p}
\big(\partial_1^{(b_1)}\cdots\partial_n^{(b_n)}v\big)
\,.
\end{equation}
Finally, we make the substitution \eqref{eq:Lambdas}
to get a polynomial in $\lambda_1,\dots,\lambda_n$
with coefficients in $V$.
\begin{theorem}\label{thm:main}
Let $V$ be a superspace with an even endomorphism $\partial$,
endowed with a $\mb Z_+$-grading \eqref{eq:Vgrading} by $\mb F[\partial]$-submodules,
and with the associated increasing filtration \eqref{eq:Vfil}.
Then for $Y\in\gr^r P^{\cl}(V)(n)$, formula \eqref{eq:formula} 
defines an element $X\in\fil^r P^{\ch}(V)(n)$.
The obtained linear map
$\gr^r P^{\cl}(V)(n)\to\fil^r P^{\ch}(V)(n)$
sending $Y\mapsto X$
induces a map $\gr^r P^{\cl}(V)(n)\to\gr^r P^{\ch}(V)(n)$, which is
the inverse of the map \eqref{eq:map}.
Consequently, 
the homomorphism of operads $\gr P^{\ch}(V)(n)\to \gr P^{\cl}(\gr V)(n)$
defined by \eqref{eq:map} is an isomorphism.
\end{theorem}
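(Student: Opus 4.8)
The plan is to verify the three assertions of Theorem~\ref{thm:main} in the order they are stated. \textbf{First}, I would check that the $X$ produced by formula \eqref{eq:formula} is a genuine chiral operation, i.e.\ that it lands in $V[\lambda_1,\dots,\lambda_n]/\langle\partial+\sum\lambda_i\rangle$ and satisfies the two sesquilinearity axioms \eqref{20160629:eq4a}--\eqref{20160629:eq4b}. This is where the machinery of Section~\ref{sec:4} does the work: the axiom involving $\partial_{z_i}f$ should follow by combining the behavior of the $\Gamma$-Fourier transform under $\partial_{z_i}$ (Lemma~\ref{lem:Gfourier4}) with the first convolution identity \eqref{eq:notaction4}, while the axiom involving $z_{ij}f$ should follow from Lemma~\ref{lem:Gfourier5} together with the second convolution identity \eqref{eq:notaction5}; the cycle and sesquilinearity relations defining $P^\cl$ guarantee that the ambiguity in writing a general $f\in\mc O^{\star T}_n$ as a combination of the $p_\Gamma$'s does not affect the answer. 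The substitution \eqref{eq:Lambdas} at the end is exactly designed so that the two conditions match up, with $\Lambda_\ell=\lambda_{L_\ell}$ converting $w_\ell$ into $-\partial_{\Lambda_\ell}$ under convolution.

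\textbf{Second}, I would verify the filtration estimate $X\in\fil^r P^\ch(V)(n)$, namely \eqref{fil4-ref}. The key input is the degree-counting lemmas: Lemma~\ref{lem:Gfourier2} says $\mc F^\Gamma(\fil^{r'}\mc O^{\star}_n)=0$ when $r'<|E(\Gamma)|$, so that only graphs $\Gamma$ with few enough edges contribute to a given filtration piece, and the grading condition \eqref{pclgrading} defining $Y\in\gr^r P^\cl$ controls how $Y^\Gamma$ raises or lowers the grading by $s+t-r$ where $s=|E(\Gamma)|=n-p$. Tracking these two bounds through the convolution (which preserves polynomial degree in the appropriate sense) should yield precisely the bound $X(\fil^s(V^{\otimes n}\otimes\mc O^{\star T}_n))\subset\fil^{s-r}V[\dots]$.

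\textbf{Third and most important}, I would show that the induced map $\gr^r P^\cl(V)(n)\to\gr^r P^\ch(V)(n)$ is a two-sided inverse to \eqref{eq:map}, which immediately gives the final isomorphism statement since \eqref{eq:map} is already known to be an injective operad homomorphism. Because the forward map is injective, it suffices to check that the composite $Y\mapsto X\mapsto Y'$ recovers $Y$, i.e.\ that $Y'^\Gamma(\bar v)=\overline{X(v\otimes p_\Gamma)}$ equals $Y^\Gamma(\bar v)$ for each $\Gamma\in\mc L(n)$. Here the orthogonality relation $\mc F^{\Gamma}(p_{\Gamma'})=\delta_{\Gamma,\Gamma'}$ (Lemma~\ref{lem:Gfourier3}, valid when $|E(\Gamma)|=|E(\Gamma')|$) is the crux: feeding $f=p_\Gamma$ into \eqref{eq:formula} collapses the double sum over $\mc L(n,p)$ to the single diagonal term, and the convolution with the constant $1$ returns $Y^\Gamma$ itself, up to the filtration-degree bookkeeping that ensures only the leading term survives modulo $\fil^{r+1}$. \textbf{The hard part} will be the third step, and within it the bookkeeping that shows all the \emph{off-diagonal} and \emph{lower-order} contributions genuinely vanish modulo $\fil^{r+1} P^\ch$: one must rule out contributions from graphs $\Gamma'$ with $|E(\Gamma')|\neq|E(\Gamma)|$ (controlled by Lemma~\ref{lem:res4}'s hypothesis and the filtration estimate from step two) and confirm that the convolution product, despite not being an honest module action, behaves well enough on the relevant graded pieces for the round trip to be the identity rather than merely injective. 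Since $\mc L(n)$ is a basis for $\mb F\mc G(n)/R(n)$ by Theorem~\ref{lem:res5}, checking the identity on lines $\Gamma\in\mc L(n)$ suffices to pin down $Y'=Y$ as elements of $\gr^r P^\cl$.
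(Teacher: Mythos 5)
Your proposal is correct and follows essentially the same route as the paper's proof: well-definedness and the sesquilinearity axioms via the $\Gamma$-Fourier transform and convolution lemmas, the filtration estimate via Lemma \ref{lem:Gfourier2} together with \eqref{pclgrading}, and the inverse property by exploiting the known injectivity of \eqref{eq:map} plus the orthogonality relation of Lemma \ref{lem:Gfourier3}, with off-diagonal terms killed by exactly the degree bookkeeping you describe (your observation that Theorem \ref{lem:res5} reduces the check to lines $\Gamma\in\mc L(n)$ is in fact a useful clarification of a point the paper leaves implicit). The one slip is a mispairing of the convolution identities: the $\partial_{z_i}f$ axiom requires \eqref{eq:notaction5} (to trade $\partial_{w_\ell}$ acting on $\mc F^\Gamma(f)$ for $\Lambda_\ell$-terms, which then cancel using sesquilinearity \eqref{eq:sesq2} of $Y$), while the $z_{ij}f$ axiom uses \eqref{eq:notaction4} --- the opposite of what you wrote.
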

\begin{proof}


First, we prove that for $f\in\mc O^{\star T}_n$ the right-hand side of \eqref{eq:formula}
is a well-defined element of the quotient space
$V[\lambda_1,\dots,\lambda_n]/\langle\partial+\lambda_1+\dots+\lambda_n\rangle$,
i.e., it does not depend on the choice of the representative
of $Y^\Gamma$ in the quotient space
$V[\Lambda_1,\dots,\Lambda_p]/\langle\partial+\Lambda_1+\dots+\Lambda_p\rangle$.
Indeed, suppose
$$
Y^\Gamma_{\Lambda_1,\dots,\Lambda_p}
\big(\partial_1^{(b_1)}\cdots\partial_n^{(b_n)}v\big)
=
(\partial+\Lambda_1+\dots+\Lambda_p)Q_b
\in\langle\partial+\Lambda_1+\dots+\Lambda_p\rangle
\,,
$$
for some $Q_b\in V[\Lambda_1,\dots,\Lambda_p]$.
Then, by Lemma \ref{lem:notaction}, we have
\begin{align*}
F&_{a+b}*
Y^\Gamma_{\Lambda_1,\dots,\Lambda_p}
\big(\partial_1^{(b_1)}\cdots\partial_n^{(b_n)}v\big)
\\
& =
F_{a+b}*
\big((\partial+\Lambda_1+\dots+\Lambda_p)Q_b\big)
\\
&=
(\partial+\Lambda_1+\dots+\Lambda_p)
(F_{a+b}*Q_b)
-
\big((\partial_{w_1}+\dots+\partial_{w_p})F_{a+b}\big)*Q_b
\\
&=
(\partial+\lambda_1+\dots+\lambda_n)
(F_{a+b}*Q_b)
\equiv0
\,,
\end{align*}
since $F_{a+b}$ is translation invariant.


Next, we check that the map 
$$
X\colon V^{\otimes n}\otimes\mc O^{\star T}_n
\to
V[\lambda_1,\dots,\lambda_n]/\langle\partial+\lambda_1+\dots+\lambda_n\rangle
$$
satisfies the sesquilinearity relations \eqref{20160629:eq4a}--\eqref{20160629:eq4b},
i.e., it is a chiral map $X\in P^{\ch}(n)$.
For $i\not\in\{i^1_{k_1},\dots,i^p_{k_p}\}$, we have,
by Lemma \ref{lem:Gfourier4},
\begin{align*}
X&_{\lambda_1,\dots,\lambda_n}(v\otimes \partial_{z_i}f)
\\
&=
\sum_{p=1}^n
\sum_{\Gamma\in\mc L(n,p)}
\!\!\!
\mc F^\Gamma_{\lambda_1+x_1,\dots,\lambda_n+x_n}(\partial_{z_i}f) *
Y^\Gamma_{\Lambda_1,\dots,\Lambda_p}\big(\big|_{x_i=\partial_i}v\big)
\\
&
=
\sum_{p=1}^n
\sum_{\Gamma\in\mc L(n,p)}
\!\!\!
(\lambda_i+x_i) \mc F^\Gamma_{\lambda_1+x_1,\dots,\lambda_n+x_n}(f) *
Y^\Gamma_{\Lambda_1,\dots,\Lambda_p}\big(\big|_{x_i=\partial_i}v\big)
\\
&
=X_{\lambda_1,\dots,\lambda_n}((\lambda_i+\partial_i)v\otimes f)
\,.
\end{align*}
Next, let $i=i^\ell_{k_\ell}$, $1\leq\ell\leq p$.
%
By Lemmas \ref{lem:Gfourier4} and \ref{lem:notaction}, we have
\begin{align*}
& \mc F^\Gamma_{\lambda_1+x_1,\dots,\lambda_n+x_n}(\partial_{z_i}f) *
Y^\Gamma_{\Lambda_1,\dots,\Lambda_p}\big(\big|_{x_i=\partial_i}v\big)
\\
& =
\bigg(
\Big(\partial_{w_\ell}-\sum_{a=1}^{k_\ell-1}(\lambda_{i^\ell_a}+x_{i^\ell_a})\Big)
\mc F^\Gamma_{\lambda_1+x_1,\dots,\lambda_n+x_n}(f) 
\bigg)
*
Y^\Gamma_{\Lambda_1,\dots,\Lambda_p}\big(\big|_{x_i=\partial_i}v\big)
\\
& =
\Big(\Lambda_\ell-\sum_{a=1}^{k_\ell-1}\lambda_{i^\ell_a}\Big)
\mc F^\Gamma_{\lambda_1+x_1,\dots,\lambda_n+x_n}(f) *
Y^\Gamma_{\Lambda_1,\dots,\Lambda_p}\big(\big|_{x_i=\partial_i}v\big)
\\
&-
\mc F^\Gamma_{\lambda_1+x_1,\dots,\lambda_n+x_n}(f) *
\bigg(
\Big(\Lambda_\ell+\sum_{a=1}^{k_\ell-1}x_{i^\ell_a}\Big)
Y^\Gamma_{\Lambda_1,\dots,\Lambda_p}\big(\big|_{x_i=\partial_i}v\big)
\bigg)
\\
& =
\lambda_i
\mc F^\Gamma_{\lambda_1+x_1,\dots,\lambda_n+x_n}(f) *
Y^\Gamma_{\Lambda_1,\dots,\Lambda_p}\big(\big|_{x_i=\partial_i}v\big)
\\
&+
\mc F^\Gamma_{\lambda_1+x_1,\dots,\lambda_n+x_n}(f) *
Y^\Gamma_{\Lambda_1,\dots,\Lambda_p}\big(\big|_{x_i=\partial_i}\partial_i v\big)
\,.
\end{align*}
For the last equality, we used the sesquilinearity \eqref{eq:sesq2} of $Y^\Gamma$.
This proves \eqref{20160629:eq4a}.

Next, let us prove equation \eqref{20160629:eq4b}.
Let $i\in\{i^\ell_1,\dots,i^\ell_{k_\ell}\}$ and $j\in\{i^h_1,\dots,i^h_{k_h}\}$.
By Lemma \eqref{lem:Gfourier5} 
and equation \eqref{eq:notaction4}, we have
\begin{align*}
& \mc F^\Gamma_{\lambda_1+x_1,\dots,\lambda_n+x_n}(z_{ij}f) *
Y^\Gamma_{\Lambda_1,\dots,\Lambda_p}\big(\big|_{x_i=\partial_i}v\big)
\\
& =
\big((w_\ell-w_h-\partial_{\lambda_i}+\partial_{\lambda_j})
\mc F^\Gamma_{\lambda_1+x_1,\dots,\lambda_n+x_n}(f)\big) *
Y^\Gamma_{\Lambda_1,\dots,\Lambda_p}\big(\big|_{x_i=\partial_i}v\big)
\\
& =
(-\partial_{\Lambda_\ell}+\partial_{\Lambda_h})\big(
\mc F^\Gamma_{\lambda_1+x_1,\dots,\lambda_n+x_n}(f) *
Y^\Gamma_{\Lambda_1,\dots,\Lambda_p}\big(\big|_{x_i=\partial_i}v\big)
\big)
\\
&\quad +
\big((-\partial_{\lambda_i}+\partial_{\lambda_j})
\mc F^\Gamma_{\lambda_1+x_1,\dots,\lambda_n+x_n}(f)\big) *
Y^\Gamma_{\Lambda_1,\dots,\Lambda_p}\big(\big|_{x_i=\partial_i}v\big)
\\
& =
(-\partial_{\lambda_i}+\partial_{\lambda_j})\big(
\mc F^\Gamma_{\lambda_1+x_1,\dots,\lambda_n+x_n}(f) *
Y^\Gamma_{\Lambda_1,\dots,\Lambda_p}\big(\big|_{x_i=\partial_i}v\big)
\big)
\,.
\end{align*}
For the last equality we used the chain rule 
and $\partial_{\lambda_i}\Lambda_{\ell'}=\delta_{\ell,\ell'}$.
This proves \eqref{20160629:eq4b}.
Hence, $X\in P^{\ch}(n)$.


Next, we prove that $X$ lies in the $r$-th filtered space $\fil^r P^{\ch}(n)$,
provided that $Y\in\gr^r P^{\cl}(n)$.
Let $f\in\fil^s\mc O^{\star T}_n$
and $v\in\fil^t(V^{\otimes n})$.
By Lemma \ref{lem:Gfourier2},
$\mc F^\Gamma_{\lambda_1,\dots,\lambda_n}(f)=0$
unless $|E(\Gamma)|=n-p\leq s$.
In this case, by the definition \eqref{pclgrading} of the grading of $P^{\cl}$,
we have
\begin{align*}
Y^\Gamma_{\Lambda_1,\dots,\Lambda_p}(v)
& \in
(\gr^{n-p+t-r}V)[\lambda_1,\dots,\lambda_n]/\langle\partial+\lambda_1+\dots+\lambda_n\rangle
\\
& \subset
(\fil^{s+t-r}V)[\lambda_1,\dots,\lambda_n]/\langle\partial+\lambda_1+\dots+\lambda_n\rangle
\,.
\end{align*}
The claim follows from the facts that the filtration of $V$ is invariant under the action of $\partial$
and the convolution product does not act on the coefficients (in $V$) of the polynomials.


To complete the proof of the theorem,
we are left to check that, for $X$ as in \eqref{eq:formula},
the image of $X$ under the map \eqref{eq:map} coincides with $Y$.
Indeed, by definition \eqref{gr3}, the image of $X$ under \eqref{eq:map}
maps $\Gamma'\in\mc G(n)$ with $s$ edges
and $\bar v=v+\fil^{t-1} V^{\otimes n}\in\gr^t V^{\otimes n}$
to
\begin{equation}
\label{eq:proof}
\begin{split}
\overline{
X_{\lambda_1,\dots,\lambda_n}(v\otimes p_{\Gamma'})
}
& =
\sum_{p=1}^n
\sum_{\Gamma\in\mc L(n,p)}
\!\!\!
\mc F^\Gamma_{\lambda_1+x_1,\dots,\lambda_n+x_n}(p_{\Gamma'}) *
Y^\Gamma_{\Lambda_1,\dots,\Lambda_p}\big(\big|_{x_i=\partial_i}v\big)
\\
& +
(\fil^{s+t-r-1}V)[\lambda_1,\dots,\lambda_n]/\langle\partial+\lambda_1+\dots+\lambda_n\rangle
\,.
\end{split}
\end{equation}
Since $p_{\Gamma'}\in\fil^{s}\mc O^{\star T}_n$,
by Lemma \ref{lem:Gfourier2} the sum over $p$ in \eqref{eq:proof} 
can be restricted by the inequality $n-p=|E(\Gamma)|\leq s=|E(\Gamma')|$.
On the other hand, if $n-p<s$, by \eqref{pclgrading}
we have 
$$
Y^\Gamma_{\Lambda_1,\dots,\Lambda_p}(v)
\in
(\fil^{s+t-r-1}V)[\lambda_1,\dots,\lambda_n]/\langle\partial+\lambda_1+\dots+\lambda_n\rangle
\,;
$$
hence the corresponding terms vanish in \eqref{eq:proof}.
We can thus restrict the sum over $p$ in \eqref{eq:proof} to $|E(\Gamma)|=|E(\Gamma')|$.
In this case, by Lemma \ref{lem:Gfourier3}, we have
$$
\mc F^\Gamma_{\lambda_1,\dots,\lambda_n}(p_{\Gamma'})
=
\delta_{\Gamma,\Gamma'}
\,.
$$
Therefore, the right-hand side of \eqref{eq:proof}
becomes $\overline{Y^\Gamma_{\lambda_1,\dots,\lambda_n}(v)}$,
completing the proof.
\end{proof}

\subsection{Examples}\label{sec:5.3}

In this section, we write down explicitly formula \eqref{eq:formula}
in some special cases.
%
%
First, assume that $Y\in P^{\cl}(V)(n)$ is such that
$Y^{\Gamma}=0$ unless $|E(\Gamma)|=0$.
For example, this happens for $Y\in\gr^0 P^{\cl}(V)(n)$, 
provided that $V=V_0$ has trivial grading \eqref{eq:Vgrading}.
Under the above assumption, only the summand with $p=n$
and the $n$-graph with no edges $\Gamma=\bullet\cdots\bullet$ 
is non-vanishing in \eqref{eq:formula}.
Thus, by Example \ref{ex:fourier1}, we get
\begin{equation}\label{eq:formula3}
X_{\lambda_1,\dots,\lambda_n}(v\otimes f)
=
f(w_1,\dots,w_n) * Y^{\bullet\cdots\bullet}_{\lambda_1,\dots,\lambda_n}(v)
\,.
\end{equation}


Next, assume that $Y\in P^{\cl}(n)$ is such that,
for $\Gamma\in\mc L(n)$,
$Y^{\Gamma}=0$ unless $\Gamma$ is the single line $1\to\cdots\to n$.
In this case, by Example \ref{ex:fourier2}, we obtain
\begin{equation}\label{eq:formula4}
\begin{split}
X_{\lambda_1,\dots,\lambda_n} &(v\otimes f)
=
\Res_{0}\!dz_{n-1}
\Res_{z_{n-1}}\!dz_{n-2}
\cdots
\Res_{z_2}\!dz_1
\\
& \times f(z_1,\dots,z_{n-1},0)
Y^{1\to\cdots\to n}\big(
e^{-\sum_{i=1}^{n-1}z_i(\lambda_i+\partial_i)}
v\big)
\,.
\end{split}
\end{equation}


In the case $n=1$, formulas \eqref{eq:formula3} and \eqref{eq:formula4}
reduce to $X_\lambda(v\otimes c)=c \,Y^{\bullet}(v)$,
where $c\in\mc O^{\star T}_1=\mb F$.
Finally, we consider the case $n=2$. In this case, $\mc L(2)$ consists only of the two graphs
$\bullet\,\,\,\bullet$ and $1\to 2$.
Hence, by \eqref{eq:formula3} and \eqref{eq:formula4}, we get
$$
X_{\lambda_1,\lambda_2}(v\otimes f)
=
f(w_1,w_2) * Y^{\bullet\,\,\bullet}_{\lambda_1,\lambda_2}(v)
+
\Res_{0}\!dz_{1}
f(z_1,0)
Y^{1\to2}_{\Lambda_1}\big(e^{-z_1(\lambda_1+\partial_1)}v\big)
\,.
$$
Note that $Y^{\bullet\,\,\bullet}$ has values in $V[\lambda_1,\lambda_2]/\langle\partial+\lambda_1+\lambda_2\rangle\simeq V[\lambda]$,
where we set $\lambda_1=\lambda$ and $\lambda_2=-\lambda-\partial$.
Hence, we denote its values as $Y^{\bullet\,\,\bullet}_\lambda(v)$.
Recall also that $Y^{1\to2}_{\Lambda_1}(v)$ is independent of $\Lambda_1$,
so we omit the subscript $\Lambda_1$.
Moreover, since $\mc O^{\star T}_2=\mb F[z_{12}^{\pm1}]$,
we may take $f(z_1,z_2)=z_{12}^m$, $m\in\mb Z$.
Under this setting, the previous formula can be rewritten as follows:
\begin{equation}\label{eq:formula5}
X_{\lambda}(v_1\otimes v_2\otimes z_{12}^m)
=
(-1)^m\partial_\lambda^m Y^{\bullet\,\,\bullet}_{\lambda}(v_1\otimes v_2)
+
(-1)^{m+1}
Y^{1\to2}\big((\lambda+\partial)^{(-m-1)}v_1\otimes v_2\big)
\,.
\end{equation}
As before, we are using the divided power notation:
$\lambda^{(-m-1)}=0$ for $m\geq0$ 
and $\lambda^{(-m-1)}=\lambda^{-m-1}/(-m-1)!$ for $m<0$.
Equation \eqref{eq:formula5}
agrees with the corresponding formulas in the proof of Theorem 10.10 of \cite{BDSHK}.

\subsection{Relation to the operad $\mathcal{L}ie$} \label{sec:lie-operad} 

Let $V = \mathbb{F}$ be the $1$-dimensional  vector space considered as an $\mathbb{F}[\partial]$-module 
with $\partial = 0$. We see from \eqref{eq:chiral-1}--\eqref{eq:chiral-2} that $P^{\ch}(2)$ is a $1$-dimensional 
vector space. Indeed any operation is determined by the image of $z_{12}^{-1} \in \mc O_n^{\star T}$. 
In fact, it follows from \cite[Eq.\ (6.25)]{BDSHK} that $P^{\ch}(2)$ is the non-trivial representation 
of the symmetric group $S_2$ on two elements. Let us call $\mu \in P^{\ch}(2)$ the operation such 
that $\mu \left( z_{12}^{-1} \right) = 1$.  Consider the operad $\mc Lie$ of Lie algebras, in which the vector 
space $\mc Lie(n)$ of $n$-ary operations has as a basis the set
\begin{equation} \label{eq:lie-basis}  
\bigl\{ [x_{\sigma(1)}, [x_{\sigma(2)}, [\cdots,x_{\sigma(n)}]\cdots]] \,\big|\, \sigma \in S_n, \; \sigma(1) = 1\bigr\} \,.
\end{equation}
In particular,
$\mc Lie(2)$ is the non-trivial $1$-dimensional representation of $S_2$, with a basis $[x_1, x_2]$. 
As an application of Theorem \ref{thm:main}, we obtain the following.
\begin{theorem}[{\cite[Sec.\ 3.1.5]{BD04}}]
There is a unique isomorphism of operads 
\[ 
P^{\ch}(\mathbb{F}) \simeq \mc Lie  \,,\quad \text{such that} \qquad  P^{\ch}(2) \ni \mu \mapsto [x_1,x_2] \in \mc Lie(2) \,. 
\]
\end{theorem}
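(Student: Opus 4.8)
The plan is to derive the statement from Theorem \ref{thm:main} together with an explicit description of $P^{\cl}(\mathbb{F})$. Equip $V=\mathbb{F}$, $\partial=0$, with the trivial $\mathbb{Z}_+$-grading $V=V_0$, so that $\gr V=V$; Theorem \ref{thm:main} then supplies an isomorphism of operads $\gr P^{\ch}(\mathbb{F})\cong P^{\cl}(\mathbb{F})$. First I would compute the right-hand side. By Theorem \ref{lem:res5}, an operation $Y\in P^{\cl}(\mathbb{F})(n)$ is determined by its values $Y^\Gamma$ on the lines $\Gamma\in\mc L(n,p)$, and since $\partial=0$ the sesquilinearity axiom \eqref{eq:sesq2} becomes $\Lambda_\ell\,Y^\Gamma=0$ for each component, inside $\mathbb{F}[\Lambda_1,\dots,\Lambda_p]/\langle\Lambda_1+\dots+\Lambda_p\rangle$. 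For $p\ge2$ this quotient is a polynomial domain in which $\Lambda_\ell\neq0$, forcing $Y^\Gamma=0$; for $p=1$ we have $\Lambda_1=\lambda_1+\dots+\lambda_n=0$ and $Y^\Gamma$ is an arbitrary scalar. Hence only the single lines $1\to i_2\to\dots\to i_n$ (those with $n-1$ edges) survive, so $\dim P^{\cl}(\mathbb{F})(n)=(n-1)!$ and $P^{\cl}(\mathbb{F})(n)$ is concentrated in grading degree $n-1$.

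Because $P^{\cl}(\mathbb{F})(n)$ sits in the single degree $n-1$, the isomorphism $\gr^r P^{\ch}(\mathbb{F})(n)\cong\gr^r P^{\cl}(\mathbb{F})(n)$ forces $\gr^r P^{\ch}(\mathbb{F})(n)=0$ for $r\neq n-1$. The pole-filtration is finite, with $\fil^0 P^{\ch}(\mathbb{F})(n)=P^{\ch}(\mathbb{F})(n)$ and $\fil^n P^{\ch}(\mathbb{F})(n)=0$; having a single nonzero associated-graded piece means it is trivial, so $P^{\ch}(\mathbb{F})(n)=\gr^{n-1}P^{\ch}(\mathbb{F})(n)$. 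Thus Theorem \ref{thm:main} upgrades to an isomorphism of operads $P^{\ch}(\mathbb{F})\cong P^{\cl}(\mathbb{F})$, and in particular $\dim P^{\ch}(\mathbb{F})(n)=(n-1)!=\dim\mc Lie(n)$ by \eqref{eq:lie-basis}.

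It remains to identify $P^{\cl}(\mathbb{F})$ with $\mc Lie$. Since $\mc Lie$ is generated by an antisymmetric binary operation modulo the Jacobi relation, and $\mu$ is antisymmetric ($P^{\ch}(2)$ being the sign representation of $S_2$), I would verify the Jacobi relation for $\mu$ in the two-dimensional space $P^{\ch}(\mathbb{F})(3)$; this yields an operad morphism $\phi\colon\mc Lie\to P^{\ch}(\mathbb{F})$ with $[x_1,x_2]\mapsto\mu$. Transporting $\phi$ through $P^{\ch}(\mathbb{F})\cong P^{\cl}(\mathbb{F})$, the class of $\mu$ corresponds to the single-edge line, since by \eqref{gr4} its image is $\overline{\mu(z_{12}^{-1})}=1$, the generator of $P^{\cl}(\mathbb{F})(2)$. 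As composition of single lines in the classical operad is concatenation, $\phi$ carries the basis $[x_1,[x_{i_2},[\cdots,x_{i_n}]\cdots]]$ of $\mc Lie(n)$ bijectively onto the single-line basis $1\to i_2\to\dots\to i_n$ of $P^{\cl}(\mathbb{F})(n)$. Hence $\phi$ is an isomorphism in each arity, its inverse is the desired map $P^{\ch}(\mathbb{F})\cong\mc Lie$, and uniqueness follows because $\mc Lie$ is generated by $[x_1,x_2]$.

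The main obstacle I anticipate is the last paragraph rather than the dimension count. It forces one to engage with the operadic compositions that this excerpt suppresses: one must check the Jacobi relation for $\mu$ using the chiral $\circ_i$-products, and verify that composing single-line operations in $P^{\cl}$ produces the concatenated line. By contrast, the input from Theorem \ref{thm:main} is clean, and it is the vanishing forced by \eqref{eq:sesq2} when $\partial=0$ that does the essential work, collapsing $P^{\cl}(\mathbb{F})$ onto the single-line graphs.
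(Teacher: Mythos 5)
Your proposal is correct, and its skeleton is the same as the paper's: reduce via Theorem \ref{thm:main} to identifying $P^{\cl}(\mathbb{F})$ with $\mc Lie$, observe that sesquilinearity \eqref{eq:sesq2} with $\partial=0$ forces $Y^\Gamma=0$ for disconnected $\Gamma$ (the quotient being a polynomial domain in which $\Lambda_\ell\neq 0$), so that by Theorem \ref{lem:res5} the connected lines give a basis and $\dim P^{\cl}(\mathbb{F})(n)=(n-1)!=\dim\mc Lie(n)$, and then match this basis with \eqref{eq:lie-basis}. Two points of divergence are worth noting. First, you make explicit a step the paper suppresses: Theorem \ref{thm:main} identifies $\gr P^{\ch}(\mathbb{F})$, not $P^{\ch}(\mathbb{F})$, with $P^{\cl}(\mathbb{F})$, and your argument --- the filtration is finite with $\fil^0 P^{\ch}(n)=P^{\ch}(n)$, $\fil^n P^{\ch}(n)=0$, while the associated graded is concentrated in the single degree $n-1$, hence $P^{\ch}(n)=\gr^{n-1}P^{\ch}(n)$ --- is precisely the justification needed for the paper's opening sentence ``it is enough to prove $P^{\cl}(\mathbb{F})\simeq\mc Lie$''. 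Second, for the final identification the paper defines the map $\mc Lie(n)\to P^{\cl}(n)$ directly on bases (iterated bracket $\mapsto$ line $1\to\sigma(2)\to\cdots\to\sigma(n)$) and asserts compatibility with compositions, whereas you build a morphism $\phi\colon\mc Lie\to P^{\ch}(\mathbb{F})$ from the presentation of $\mc Lie$ (antisymmetric binary generator modulo Jacobi) and prove it is an isomorphism by showing it carries basis to basis. The two routes are logically equivalent and both rest on the same deferred computation with the $\circ_i$-products not recalled in this paper --- that composing single-line operations yields the concatenated line (the paper's ``one sees readily'') --- which you correctly flag as the real remaining work; your route additionally isolates the Jacobi identity for $\mu$ as a separate verification (most easily done on the $P^{\cl}$ side), a check the paper's direct basis map absorbs into its compatibility claim.
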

\begin{proof}
By Theorem \ref{thm:main}, it is enough to prove the isomorphism 
of graded operads $P^{\cl}(\mathbb{F}) \simeq \mc Lie$.  
Let  $Y\in P^{\cl}(n)$ and $\Gamma \in \mc G(n)$. We see from \eqref{eq:sesq2} that $Y^\Gamma$ vanishes unless $\Gamma$ is connected, 
in which case $Y^\Gamma\colon\mathbb{F} \rightarrow \mathbb{F}$. It follows that  $P^{\cl}(n)$ is the quotient of $\mathbb{F} \mc G_c(n)$ by the cycle relations \eqref{eq:cycle},  where $\mc G_c(n)$ is the subset of connected graphs. 

The vector space $\mc Lie(n)$ has dimension $(n-1)!$ and has a basis given by \eqref{eq:lie-basis}. On the other hand, we see from Theorem \ref{lem:res5} that a basis for $P^{\cl}(n)$ is given by connected lines in $\mc L(n)$. It follows that $\dim P^{\cl}(n) = \dim \mc Lie(n) = (n-1)!$. The line 
\[
\begin{tikzpicture}
\node at (-0.2,1) {$\Gamma=$};
\draw (0.5,1) circle [radius=0.07];
\node at (0.5,0.6) {$1$};
\draw[->] (0.6,1) -- (1.2,1);
\draw (1.3,1) circle [radius=0.07];
\node at (1.3,0.6) {$\sigma(2)$};
\draw[->] (1.4,1) -- (2,1);
\node at (2.4,1) {$\cdots$};
\draw[->] (2.8,1) -- (3.4,1);
\draw (3.5,1) circle [radius=0.07];
\node at (3.5,0.6) {$\sigma(n)$};
%
\end{tikzpicture}
\]
is associated to the operation \eqref{eq:lie-basis}. One sees readily that this assignment is compatible with operations;  therefore, the map $\mc Lie(n) \rightarrow P^{\cl}(n)$ is an isomorphism. Uniqueness follows since any operation in $\mc Lie$ is a composition of binary operations. 
\end{proof}

\end{document}